\theoremstyle{plain}
\newtheorem{theorem}{Theorem}[section]
\newtheorem{proposition}[theorem]{Proposition}
\newtheorem{lemma}[theorem]{Lemma}
\theoremstyle{definition}
\newtheorem{definition}[theorem]{Definition}
\theoremstyle{remark}
\newtheorem{remark}[theorem]{Remark}
\numberwithin{equation}{section}
\newcommand{\prop}{\mathrm{prop}}
\newcommand{\Ind}{\mathrm{Ind}}
\newcommand{\sgn}{\mathrm{sgn}}
\newcommand{\ev}{\mathrm{ev}}
\newcommand{\supp}{\mathrm{Supp}}
\title{On localized signature and higher rho invariant of fibered manifolds}
\author[1]{Liu Hongzhi}
\author[2]{Wang Jinmin}
\affil[1]{School of Mathematics, Shanghai University of Finance and Economics, \protect\\ Shanghai 200433, P. R. China. \protect\\ e-mail: liu.hongzhi@mail.shufe.edu.cn}
\affil[2]{Shanghai Center for Mathematical Sciences, Fudan University,
\protect\\ Shanghai 200433, P. R. China. \protect\\ e-mail: wangjm15@fudan.edu.cn}
\date{}
\begin{document}
	
	\maketitle

	\abstract{
Higher index of signature operator is a far reaching generalization of signature of a closed oriented manifold.
When two closed oriented manifolds are homotopy equivalent, one can define a secondary invariant of the relative signature operator called higher rho invariant. The higher rho invariant detects the topological nonrigidity of a manifold. In this paper, we prove product formulas for higher index and higher rho invariant of signature operator on fibered manifolds. Our result implies the classical product formula for numerical signature of fiber manifolds obtained by Chern, Hirzebruch, and Serre in \cite{CSH57}. We also give a new proof of the product formula for higher rho invariant of signature operator on product manifolds, which is parallel to the product formula for higher rho invariant of Dirac operator on product manifolds obtained by Xie and Yu in \cite{XY14P} and  Zeidler in \cite{Z16}.

	}
	\section{Introduction}
	The signature of a $4k$-dimensional manifold is defined to be the signature of the cup product as a non-degenerate symmetric bilinear form on the vector space of $2k$-cohomology classes. In \cite{CSH57}, Chern, Hirzebruch and Serre established a product formula of signature for fibered manifold. More precisely, let $F \to E\to B$ be a fibered manifold with base manifold $B$ and fiber manifold $F$, if $\pi_1(B)$ acts trivially on $H^*_{dR}(F)$, the de Rham cohomology of $F$. We have the following product formula
	\begin{equation}\label{equa ancestor formula}
	\sgn(B)\times \sgn(F)=\sgn(E).
	\end{equation}

	The signature of a manifold is also equal to the Fredholm index of the signature operator.
	When taking into account of the fundamental group of the manifold, one can introduce higher invariants of the signature operator, which lie in the $K$-theory of certain geometric $C^*$-algebras.
	Let $M$ be an $m$-dimensional manifold with fundamental group $\pi_1(M)=G$ and universal covering $\widetilde{M}$. Let $D^{sgn}_M$ be the signature operator on $M$. The higher index of $D^{sgn}_M$, $\Ind(D^{sgn}_{M})$ is a generalization of the Fredholm index, and is defined to be an element in $K_m(C^*(\widetilde{M})^{G})$, where $C^*(\widetilde{M})^{G}$ is the equivariant Roe algebra of $\widetilde M$ and is Morita equivalent to the reduced group $C^*$-algebra $C^*_r(G)$.
	The higher index of signature operator is invariant under homotopy equivalence and oriented cobordism, and plays a fundamental role in the study of classification of manifolds. 
	On the other hand, $D^{sgn}_M$ defines a $K$-homology class $[D^{sgn}_M]$ in $K_m(C^*_L(\widetilde{M})^G)$, the $K$-theory of the equivariant localization algebra. See Section \ref{sec:preliminary} and \ref{section Signature class of Hilbert-Poincare complex  } for the explicit definitions of equivariant geometric $C^*$-algebras, higher index and $K$-homology class of signature operator.
	
	Furthermore, if $f: M'\to M$ is an orientation-preserving homotopy equivalence of closed manifolds, then there exists a concrete homotopy path that realizes the equality
	\[
	\text{Ind}(D^{sgn}_{M'})=\text{Ind}(D^{sgn}_{M})\in K_m(C^*(\widetilde{M})^G),
	\]
	where $m$ is the dimension of $M$ and $M'$.  
	This homotopy path allows one to define a secondary invariant of signature operator associated to the homotopy equivalence $f$, called higher rho invariant, in the $K$-theory of the equivariant  obstruction algebra $C^*_{L,0}(\widetilde{M})^G$.
	The higher rho invariant of signature operator associated to homotopy equivalence plays a central role in estimating the topological nonrigidity of a manifold (cf: \cite{HR3, PS16, Z17, WXY16, JL19}).
	
	Inspired by Chern, Hirzebruch and Serre's product formula, we prove a product formula for higher index and higher rho invariant of signature operator on fibered manifold. More precisely, consider a closed fibered manifold $F\to E\to B$ with base space $B$ and fiber $F$. Denote the fundamental group of $E$ by $G$, and the fundamental group of $B$ by $H$. Let $\widetilde E$ and $\widetilde{B}$ be the universal covering spaces of $E$ and $B$ respectively. Set $n=\dim F$ and $m=\dim B$. We first define equivariant family localization algebra $C^*_L(\widetilde{E};\widetilde{B})^G$, and family obstruction algebra $C^*_{L,0}(\widetilde{E};\widetilde{B})^G$. We show that there are naturally defined product maps:
\begin{eqnarray*}
	\phi : K_{m}( C_{L}^*(\widetilde{B})^H )  \otimes   K_{n}(C^*_{L}(\widetilde{E},\widetilde{B})^G )&\to& K_{m+n} (C^*_L(\widetilde{E})^G ),\\
\phi_0:  K_{m}( C_{L}^*(\widetilde{B})^H )  \otimes   K_{n}(C^*_{L,0}(\widetilde{E},\widetilde{B})^G )&\to& K_{m+n} (C^*_{L,0}(\widetilde{E})^G ).
\end{eqnarray*}
Taking advantage of the fiberwise signature operator, we introduce the family $K$-homology class of family signature operator along $F$, denoted by $[D^{sgn}_{E,B}]$, in the $K$-theory of the equivariant family localization algebra, and the family higher rho invariant $\rho(f;B)$, associated to a fiberwise homotopy equivalence $f:E'\to E$, in the $K$-theory of the equivariant family obstruction algebra. 
The following theorem is a product formula for $K$-homology class of signature operator on fibered manifold, which implies the product formula for higher index of signature operator.
\begin{theorem}\label{theo:main 1}
Let $F\to E\to B$ be fibered manifold with base space $B$ and fiber $F$. Denote the fundamental group of $E$ by $G$, and the fundamental group of $B$ by $H$. Let $\widetilde E$ and $\widetilde{B}$ be the universal covering spaces of $E$ and $B$ respectively. Write $\dim F=n$ and $\dim B=m$. Let $[D^{sgn}_{E,B}]$ be the family $K$-homology class of the family signature operator in $K_n(C^*_{L}(\widetilde{E},\widetilde{B})^G)$. We have the following product formula for family $K$-homology class of family signature operator
		\[
		k_{mn}\cdot\phi ([D_B^{sgn}] \otimes [D_{E,B}^{sgn}] )= [D_E^{sgn}],
		\]
where $k_{mn}=1$ when $mn$ is even and $k_{mn}=2$ otherwise, and $\phi$ is the product map
\[\phi : K_{m}( C_{L}^*(\widetilde{B})^H )  \otimes   K_{n}(C^*_{L}(\widetilde{E},\widetilde{B})^G )\to K_{m+n} (C^*_{L}(\widetilde{E})^G ). \]
\end{theorem}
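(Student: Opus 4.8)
The plan is to realise each of the three classes as the signature class of a suitable (equivariant, respectively family-equivariant) Hilbert--Poincar\'e complex in the sense of Higson--Roe, and then to reduce the asserted identity to an algebraic product formula for signature classes under graded tensor products of such complexes. Thus $[D_B^{sgn}]$ is the signature class of the $H$-equivariant de Rham complex of $\widetilde B$, the family class $[D_{E,B}^{sgn}]$ is the signature class of the family of vertical (fibrewise) de Rham complexes along $F$ assembled over $\widetilde B$, and $[D_E^{sgn}]$ is the signature class of the $G$-equivariant de Rham complex of $\widetilde E$. By the construction of the product map $\phi$ on the $K$-theory of the localization algebras, $\phi\bigl([D_B^{sgn}]\otimes[D_{E,B}^{sgn}]\bigr)$ is represented by the signature class of the graded tensor product of the base complex with the vertical family complex; the task is therefore to compare this tensor-product complex with the de Rham complex of $\widetilde E$, and to keep track of the resulting normalisation.

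\textbf{Geometric decomposition.} First I would fix a $G$-invariant Ehresmann connection, i.e. a horizontal distribution on $\widetilde E\to\widetilde B$, and use it to split differential forms into horizontal and vertical bidegrees. This identifies $\Omega^*(\widetilde E)$, as a graded Hilbert module over $\Omega^*(\widetilde B)$, with the completed fibrewise tensor product of $\Omega^*(\widetilde B)$ with the vertical complex $\Omega^*_{\mathrm{vert}}(\widetilde E)$; under this identification the Hodge--de Rham operator $d+d^*$ on $\widetilde E$ agrees with the differential of the graded tensor product of the two complexes up to zeroth-order terms coming from the curvature of the connection. I would then show that these curvature terms can be scaled away along a homotopy of Hilbert--Poincar\'e complexes that preserves the duality structure and the equivariant family localization structure. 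Since the signature class is invariant under such homotopies, the signature class of $\widetilde E$ coincides with that of the honest tensor-product complex, realising $[D_E^{sgn}]$ and matching it with $\phi\bigl([D_B^{sgn}]\otimes[D_{E,B}^{sgn}]\bigr)$ up to the constant $k_{mn}$.

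\textbf{The normalisation factor.} It remains to prove the algebraic statement that the signature class of the graded tensor product of two Hilbert--Poincar\'e complexes of formal dimensions $m$ and $n$ equals $k_{mn}$ times the exterior product of their signature classes. The total duality involution factors as $T_E=T_B\otimes T_{\mathrm{vert}}$, but the normalising powers of $i$ built into the definition of the signature involution do not multiply naively across parities. When $mn$ is even at least one factor carries an honest $\mathbb{Z}/2$-grading, the product of the two signature classes is precisely the signature class of the tensor product, and $k_{mn}=1$. When both $m$ and $n$ are odd, each factor is an ungraded (odd) signature class, and their external product in $K$-homology is formed through a doubling (Clifford) construction; comparing the resulting even duality operator with $T_B\otimes T_{\mathrm{vert}}$ forces an extra factor of $2$, whence $k_{mn}=2$. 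This is the $K$-theoretic shadow of the fact that the odd-dimensional factors contribute nothing to the numerical Chern--Hirzebruch--Serre formula.

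\textbf{Main obstacle.} The two genuinely nontrivial points are exactly those flagged above. The first is analytic: one must verify that the curvature correction terms in the decomposition of $d+d^*$ really can be removed by a homotopy that respects both the Hilbert--Poincar\'e duality and the equivariant family localization structure, so that no information is lost in passing to the strict tensor product. The second, which I expect to be the harder part, is the careful bookkeeping of the duality involutions and their $i$-power normalisations in the odd $\times$ odd case, since this is precisely where the discrepancy between $T_B\otimes T_{\mathrm{vert}}$ and the natural grading of the product complex forces the factor $k_{mn}=2$. Once the tensor decomposition is established, the even cases are comparatively routine.
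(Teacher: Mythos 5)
Your strategy is genuinely different from the paper's, and it contains a gap at its central step. The paper never attempts a global comparison between the de Rham complex of $\widetilde E$ and a tensor-product complex: it first proves the formula for honest Cartesian products by direct operator computation (Propositions \ref{prop pro for of sig of pro mfld} and \ref{prop product localized sig}), and then handles a general fibration by a cutting-and-pasting argument --- it introduces the auxiliary algebra $C^*_{\widetilde B,L,0}(\widetilde E)^G$ of operators localized along the base, establishes a Mayer--Vietoris sequence for it (Proposition \ref{prop:MV}), shows the product map and the class on $\widetilde E$ both lift to this algebra (Lemmas \ref{lemma:factorthu} and \ref{lemma:localized higher rho}, using the rescaled metrics $g^E+(n+r)\pi^*g^B$ to gain propagation control along $\widetilde B$), and finally runs an induction over the skeleta of a fine triangulation of $B$, invoking the Cartesian case only over simplices where the bundle is trivial.

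The gap in your proposal is the claim that the curvature terms ``can be scaled away along a homotopy of Hilbert--Poincar\'e complexes.'' With an Ehresmann connection the differential decomposes as $d_{\widetilde E}=d_{\mathrm{vert}}+\nabla+\iota_\Omega$, where $\nabla$ is the horizontal covariant derivative on the infinite-dimensional bundle of vertical forms and $\iota_\Omega$ is contraction with the curvature. Comparing bidegrees in $d_{\widetilde E}^2=0$ gives $\nabla^2=-[d_{\mathrm{vert}},\iota_\Omega]$, hence $(d_{\mathrm{vert}}+\nabla)^2=\nabla^2\neq 0$ whenever the curvature is nonzero. So the purported endpoint of your homotopy --- the ``strict tensor product complex'' --- is not a chain complex at all, and homotopy invariance of the signature class has nothing to be applied to. Even in the flat case, $\nabla$ is a local-coefficient (monodromy-twisted) differential rather than $d_{\widetilde B}\otimes 1$, and identifying the signature class of the twisted complex with the external product $\phi([D_B^{sgn}]\otimes[D_{E,B}^{sgn}])$ requires exactly the locality-of-$K$-homology input that the paper's Mayer--Vietoris induction supplies; in effect your scaling step is equivalent in strength to the theorem itself. (Your bookkeeping of $k_{mn}$ --- factor $1$ when a genuine $\mathbb{Z}/2$-grading is present, factor $2$ in the odd$\times$odd case via doubling --- is consistent with the paper's Proposition \ref{prop pro for of sig of pro mfld}, but it sits downstream of the unproved decomposition step.)
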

We also obtain the following product formula for higher rho invariant of signature operator on fibered manifold.
\begin{theorem}\label{theo:main 2}Let $F\to E\to B$ and $F'\to E'\to B$ be two fibered manifolds with base space $B$ and fiber $F$ and $F'$ respectively. Let $f: E'\to E$ be a fiberwise homotopy equivalence. Denote the fundamental group of $E$ and $E'$ by $G$, and the fundamental group of $B$ by $H$. Let $\widetilde E, \widetilde{E'}$ and $\widetilde{B}$ be the universal covering spaces of $E$, $E'$ and $B$ respectively. Write $\dim F=n$ and $\dim B=m$. Let $\rho(f;B)$ be the family higher rho invariant associated to fiberwise homotopy equivalence $f$ defined in $K_n(C^*_{L,0}(\widetilde{E},\widetilde{B})^G)$. We have the following product formula for higher rho invariant associated to fiberwise homotopy equivalence
		\[
		k_{mn}\cdot\phi_0 ([D_B^{sgn}] \otimes \rho(f;B) )= \rho(f),
		\]
where $k_{mn}=1$ when $mn$ is even and $k_{mn}=2$ otherwise, and $\phi_0$ is the product map
\[\phi_0 : K_{m}( C_{L}^*(\widetilde{B})^H )  \otimes   K_{n}(C^*_{L,0}(\widetilde{E},\widetilde{B})^G )\to K_{m+n} (C^*_{L,0}(\widetilde{E})^G ). \]
\end{theorem}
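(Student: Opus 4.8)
The plan is to deduce Theorem \ref{theo:main 2} by running the proof of Theorem \ref{theo:main 1} one parameter deeper, at the level of the obstruction algebras. Recall that both $\rho(f)$ and $\rho(f;B)$ are secondary invariants: the fiberwise homotopy equivalence $f$ produces an explicit invertible path in the (family) obstruction algebra, built from the path of Hilbert--Poincaré complexes induced by $f$, whose class is the higher rho invariant. The product map $\phi_0$ is the relative counterpart of $\phi$, obtained by retaining the localization parameter of the obstruction algebra while performing the same multiplication by a representative of $[D_B^{sgn}]$. I would first record that $\phi$ and $\phi_0$ are compatible with the inclusions $\iota$ of obstruction algebras into localization algebras, i.e. that
\[
\iota_* \circ \phi_0 = \phi \circ (\mathrm{id}\otimes \iota_*)
\]
as maps $K_m(C^*_L(\widetilde{B})^H)\otimes K_n(C^*_{L,0}(\widetilde{E},\widetilde{B})^G)\to K_{m+n}(C^*_L(\widetilde{E})^G)$, and that both product maps intertwine the family and total Higson--Roe exact sequences. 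This organizes the computation as a morphism of six-term sequences and reduces the theorem to identifying the paths produced on each side.

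The core step is to show that $\phi_0([D_B^{sgn}]\otimes \rho(f;B))$ is represented, up to homotopy and the scalar $k_{mn}$, by the very path that defines $\rho(f)$. I would unwind the definition of $\rho(f;B)$ as a fiberwise invertible path in $C^*_{L,0}(\widetilde{E},\widetilde{B})^G$ and multiply it with a representative of $[D_B^{sgn}]$ using the product construction of $\phi$. The outcome is an invertible path in $C^*_{L,0}(\widetilde{E})^G$, and the point is that applying the fiberwise analysis of Theorem \ref{theo:main 1} uniformly along the homotopy parameter identifies this path with $k_{mn}^{-1}$ times the path realizing $\rho(f)$. Concretely, the product of the signature class of $B$ with the path witnessing the homotopy invariance of the fiberwise signature index is itself a path witnessing the homotopy invariance of the total signature index, and by the compatibility diagram above its obstruction class is forced to be $\rho(f)$.

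The main obstacle will be this last identification of paths. It requires tracking how the fiberwise homotopy induced by $f$ interacts with the product structure at the operator level, verifying that the signature operator on $\widetilde{E}$ decomposes compatibly with the product along the base uniformly in the homotopy parameter, and controlling propagation and grading conventions so that the two invertible paths coincide up to a controlled homotopy in $C^*_{L,0}(\widetilde{E})^G$. The scalar $k_{mn}$ enters exactly as in Theorem \ref{theo:main 1}, from the parity mismatch in matching the Clifford gradings when $mn$ is odd, and is inherited without change. Once the paths are matched, the identity $k_{mn}\cdot\phi_0([D_B^{sgn}]\otimes\rho(f;B))=\rho(f)$ follows at once.
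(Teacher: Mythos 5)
There is a genuine gap, and it sits exactly at what you call the ``core step.'' Your plan is to multiply the invertible path defining $\rho(f;B)$ by a representative of $[D_B^{sgn}]$ and then identify the result, operator by operator, with the path defining $\rho(f)$, ``verifying that the signature operator on $\widetilde{E}$ decomposes compatibly with the product along the base uniformly in the homotopy parameter.'' For a genuinely non-trivial fibration no such decomposition exists: the metric on $E$ is not a product, $d_{\widetilde E}$ is not $d_{\widetilde B}\otimes 1+ 1\otimes d_{\widetilde F}$ in any global sense, and indeed the product map $\phi_0$ itself (Theorem \ref{thm: product maps}) is only \emph{defined} through local trivializations of the bundle, using representatives of $[D_B^{sgn}]$ whose propagation is smaller than the radius of local triviality. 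So the global path comparison you want to perform cannot even be formulated; it is intrinsically a local statement over $B$, and your proposal contains no mechanism for globalizing it. The compatibility square $\iota_*\circ\phi_0=\phi\circ(\mathrm{id}\otimes\iota_*)$ and an intertwining of Higson--Roe sequences do not supply that mechanism either: the exact sequence that actually does the gluing in this problem is a Mayer--Vietoris sequence in the \emph{base} direction, not the Higson--Roe sequence relating $C^*_{L,0}$, $C^*_L$ and $C^*$. Your argument, as written, proves the theorem only when $E=F\times B$ is a trivial bundle, i.e.\ it reproduces Theorem \ref{theo product of higher rho} of Section \ref{sec:product formula}.

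The paper closes this gap with three ingredients that are absent from your proposal. First, it introduces the auxiliary algebra $C^*_{\widetilde B,L,0}(\widetilde E)^G$ of elements of the obstruction algebra that can be localized along the base (Definition \ref{def:localization at base}), and shows that $\phi_0$ factors through its $K$-theory via a refined product $\phi_{L,0}$ (Lemma \ref{lemma:factorthu}). Second, it lifts $\rho(f)$ to a class $\rho_L(f)$ in this base-localized algebra (Lemma \ref{lemma:localized higher rho}); this is not formal, but uses a geometric rescaling trick, replacing $g^E$ by the metrics $g^E+(n+r)\pi^*g^B$ so that the propagation of the representing path along $\widetilde B$ tends to zero. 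Third, with both sides now living in $K_*(C^*_{\widetilde B,L,0}(\widetilde E)^G)$, the refined identity $\phi_{L,0}([D_B^{sgn}]\otimes\rho(f;B))=\rho_L(f)$ is proved by choosing a fine triangulation of $B$, inducting on the skeleta $B^{(k)}$, and gluing with the Mayer--Vietoris sequence of Proposition \ref{prop:MV}; only on the locally trivial pieces does the direct operator computation of Section \ref{sec:product formula} (which is where $k_{mn}$ genuinely arises) get applied. Applying $\ev_*$ then yields Theorem \ref{theo:main 2}. In short: your proposal is a correct outline of the trivial-bundle case, but the passage from local triviality to a general fibration --- the base-localized algebra, the rescaling lift, and the cutting-and-pasting induction --- is the actual content of the proof and is missing.
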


 As an application of Theorem \ref{theo:main 1}, we give an alternative proof of the product formula of Chern, Hirzebruch and Serre (cf: \cite{CSH57}). Also, the product formula of higher rho invariant stated in Theorem \ref{theo:main 2} can be applied to study the behavior of higher rho map in \cite{WXY16} under fibration, and thus can be applied to study the topological nonrigidty of fibered manifold.

	We mention that the product formula for higher index of signature operator has been obtained by Wahl in \cite{Wahl10}. In this paper, we give a new proof of Wahl's product formula. On the other hand, product formula for higher rho invariant for positive scalar curvature metric on product manifolds has been proved by Siegel in his thesis \cite{Siegel12}, by Xie and Yu in \cite{XY14P}, and by Zeidler in \cite{Z16}. Their results and Theorem 6.8, 6.9 in \cite{WXY16} inspire us to study the product formula for higher rho invariant for signature operators.
	
The paper is organized as follows. In Section \ref{sec:preliminary}, we briefly recall some definitions of geometric $C^*$-algebras that we may use throughout the paper. In Section \ref{section Signature class of Hilbert-Poincare complex  }, we revisit the construction of several higher invariants associated to the signature operator. Next in Section \ref{sec:product formula}, we prove the product formulas for higher index and higher rho invariant of signature operator on product manifolds. In Section \ref{sec:product for fiber}, we generalize the product formulas to fibered manifold and prove Theorem \ref{theo:main 1} and \ref{theo:main 2}. We shall define an auxiliary $C^*$-algebra consisting of operators that can be localized along the base manifold, and use the Mayer-Vietoris arguments.

	\section{Preliminary}\label{sec:preliminary}
	The aim of this section is to briefly recall some basic definitions of geometric $C^*$-algebras used throughout the paper.
	For more details, we refer the readers to \cite{WXY16}.

 Let $X$ be a proper metric space and $G$ be a finitely represented discrete group. Suppose that $G$ acts on $X$ properly by isometries.	For simplicity, we assume that the $G$-action is free. Let $C_0(X)$ be the $C^*$-algebra consisting of all complex-valued continuous functions on $X$ that vanish at infinity.
 An $X$-module is a separable Hilbert space $H_X$ equipped with a $*$-representation of $C_0(X)$. It is called nondegenerate if the $*$-representation is nondegenerate, and standard if no nonzero function in $C_0(X)$ acts as a compact operator. Additionally we assume that $H_X$ is equipped with a unitary representation of $G$ which is compatible with $C_0(X)$-representation, that is,
$$\forall f\in C_0(X),\ g\in G,\ \pi(g) \phi(f)=\phi(g.f)\pi(g)$$
 where $\phi$ (resp. $\pi$) is the $C_0(X)$(resp. $G$)-representation on $H_X$ and $g.f(x)= f(g^{-1}x)$.



	Now let us recall the definitions of propagation of operator and locally compact operator. 
	\begin{definition}\label{def propa local pseudo}
		Under the above assumptions, let $T$ be a bounded linear operator acting on $H_X$.
		\begin{enumerate}
			\item The \emph{propagation} of $T$ is defined by
			\begin{equation}
			\prop(T)=\sup\{d(x,y) ~|~ (x,y)\in \text{Supp}(T) \},
			\end{equation}
			 where $\text{Supp}(T)$ is the complement (in $X\times X$) of the set of points $(x,y) \in X\times X$ such that there exists $f_1,f_2\in C_0(X)$ such that $f_1Tf_2=0$ and $f_1(x)f_2(y)\neq 0$;
			\item $T$ is said to be \emph{locally compact} if both $fT$ and $Tf$ are compact for all $f\in C_0(X)$.
		\end{enumerate}
	\end{definition}
	
	In the following, we recall the definitions of the equivariant Roe algebra, localization algebra, and the obstruction algebra.
	\begin{definition}\label{def roe and localization}
		Let $H_X$ be a standard nondegenerate $X$-module and $B(H_X)$ the set of all bounded linear operators on $H_X$.
		\begin{enumerate}
			\item The $G$-equivariant $Roe$ algebra of $X$, denoted by $C^*(X)^G$, is the $C^*$-algebra generated by all $G$-equivariant locally compact operators with finite propagation in $B(H_X)$.
			\item The $G$-equivariant localization algebra $C_L^*(X)^G$ is the $C^*$-algebra generated by all uniformly norm-bounded and uniformly norm-continuous functions $f : [1,\infty)\to C^*(X)^G$ such that 	
			\[
			\prop(f(t))<\infty \text{ and }\prop(f(t))\to 0\  \text{~as~}  t \to \infty.
			\]
			\item The $G$-equivariant obstruction algebra $C^*_{L,0}(X)^G$ is defined to be the kernel of the following evaluation map
			\begin{eqnarray*}
				\ev:\;C_L^*(X)^G&\to & C^*(X)^G\\
				f&\mapsto  & f(1)
			\end{eqnarray*}
			In particular, $C^*_{L,0}(X)^G$ is an ideal of $C_L^*(X)^G$.
			
			
		\end{enumerate}
	\end{definition}

	
	%

	\begin{remark}
		Up to isomorphism, $C^*(X)^G$ does not depend on the choice of the standard nondegenerate $X$-module.
		The same holds for $C_L^*(X)^G$ and $C_{L,0}^*(X)^G$. 
	\end{remark}
	
	When $X$ is a Galois $G$-covering of a closed Riemannian manifold,  $L^2(X)$ is a standard nondegenerate $X$-module. In this case, there is an equivalent definition of equivariant Roe algebra.
	\begin{definition}\label{def:precompletion} Let $X$ be a Galois $G$-covering of a closed Riemannian manifold.
		Set $\mathbb{C}[X]^G$ as a $*$-algebra consisting of integral operators given by
		$$\forall f\in L^2(X),\ f\mapsto \int_Xk(x,y)f(y)dy,$$
		where $k: X\times X\to \mathbb{C} $ is uniformly continuous, bounded on $X\times X$, and has finite propagation, i.e.
		$$\exists \delta>0,\text{ s.t. }k(x,y)=0, \text{\ if \ }d(x,y)>\delta,$$
		and is $G$-equivariant, i.e. $k(gx,gy)=k(x,y)$ for any $g\in G$. The $G$-equivariant equivariant Roe algebra is the operator norm completion of $\mathbb{C}[X]^G$.
	\end{definition}

\begin{remark}
	If we remove the cocompactness of the $G$-action, Definition \ref{def:precompletion} actually gives us the uniform equivariant Roe algebra, which is different from the equivariant Roe algebra.
\end{remark}

	Suppose that $T\in \mathbb{C}[X]^G$ has corresponding Schwartz kernel $k(x,y)$. The support of $T$ defined in Definition \ref{def roe and localization} is simply given by
	\begin{equation}\label{eq:support}
	\supp(T)=\overline{\{(x,y)\in X\times X:k(x,y)\ne 0\}}.
	\end{equation}
	
	Similarly we define the $G$-equivariant localization algebra $C^*_L(X)^G$ to be the completion of all paths on
	$t\in [0,+\infty)$ with value in $\mathbb{C}[X]^G$, which are uniformly continuous and uniformly bounded with respect to the operator norm, and have propagation going to zero as $t$ goes to infinity.
	
 Definition \ref{def:precompletion} can be easily generalized to the case where $L^2(X)$ is replaced by $L^2$-section of an Hermitian vector bundle over $X$ on which $G$ acts by isometries. The above definitions coincide with Definition \ref{def roe and localization}, as those $C^*$-algebras are independent of the choice of the standard nondegenerate $X$-module.
	
	Now we consider a product of two manifolds.
	Let $M, N$ be two closed manifolds and $\widetilde{M}$, $\widetilde{N}$ be their Galois $G$,$H$-covering spaces respectively. For any pair of integral operators in $\mathbb{C}[\widetilde M]^G$ and $\mathbb{C}[\widetilde N]^H$, their tensor product is well-defined as an operator in $\mathbb{C}[\widetilde M\times \widetilde N]^{G\times H}$. This induces the following product maps
	\begin{equation}\label{eq:K-product}
	\begin{split}
			\psi: K_m(C^*(\widetilde{M})^G) \otimes K_n(C^*(\widetilde{N})^H) &\to K_{m+n} (C^*_L(\widetilde{M}\times \widetilde{N})^{G\times H}),\\
			\psi_L: K_m(C^*_L(\widetilde{M})^G) \otimes K_n(C^*_L(\widetilde{N})^H) &\to K_{m+n} (C^*_L(\widetilde{M}\times \widetilde{N})^{G\times H}),\\
			\psi_{L,0}: K_m(C^*_L(\widetilde{M})^G) \otimes K_n(C^*_{L,0}(\widetilde{N})^H) &\to K_{m+n} (C^*_{L,0}(\widetilde{M}\times \widetilde{N})^{G\times H}).
	\end{split}
	\end{equation}
	
%
%
	
	\section{Higher invariant associated to signature operator}\label{section Signature class of Hilbert-Poincare complex  }
	
	In this section, we recall a formula of the higher index of signature operator, which was obtained by Higson and Roe in \cite{HR1} and \cite{HR2}. After that we will give the construction of $K$-homology class of the signature operator, which was originally introduced by Weinberger, Xie and Yu in \cite{WXY16}. At last, we give the definition of the higher rho invariant of controlled homotopy equivalence of manifolds.

    In this section, all manifolds mentioned are not assumed to be compact or connected unless otherwise noted.
    \subsection{Higher index of signature operator}\label{sus:higher sig}
     Let $M$ be a Riemannian manifold of dimension $m$. Let $\widetilde{M}$ be a Galois $G$-covering space of $M$, where $G$ is a finitely represented discrete group. Denote by $\Lambda^p(\widetilde{M})$ the $L^2$-completion of compactly support smooth differential $p$-forms on $\widetilde{M}$. Let $d_{\widetilde{M}}$ be the usual differential operators, which is an unbounded operator from $\Lambda^{p}(\widetilde{M})$ to $\Lambda^{p+1}(\widetilde{M})$. We will write $\Lambda^{even}(\widetilde{M})=\oplus_{k}\Lambda^{2k}(\widetilde{M})$,
	$\Lambda^{odd}(\widetilde{M})=\oplus_{k}\Lambda^{2k+1}(\widetilde{M})$,
	and $\Lambda(\widetilde M)=\Lambda^{even}(\widetilde{M})\oplus \Lambda^{odd}(\widetilde{M})$.
	
	Set $D_{\widetilde{M}}=d_{\widetilde{M}}+ d_{\widetilde{M}}^*$. Let $*$ be the Hodge $*$-operator of $\Lambda^*(\widetilde{M})$. Define
$S_{\widetilde{M}}$ as
	\begin{eqnarray*}
		S_{\widetilde{M}}: \Lambda^p(\widetilde{M})&\to & \Lambda^{n-p}(\widetilde{M}),\\
		\omega & \to & i^{p(p-1)+[\frac{n}{2}]}  *\omega.
\end{eqnarray*}

We now recall the representative of higher index of signature operator according to the parity of $m$.

 \paragraph{Odd case.}
  When $m$ is odd, the signature operator $D^{sgn}_M$ is given by
 \[
 iD_{\widetilde{M}}S_{\widetilde{M}}: \Lambda^{even}(\widetilde{M})\to \Lambda^{even}(\widetilde{M}).
 \]
 It is shown in \cite{HR1} and \cite{HR2} that

 \begin{enumerate}
  \item $D_{\widetilde{M}}\pm S_{\widetilde{M}}$ are both invertible;
  \item the invertible operator
  \[(D_{\widetilde M}+ S_{\widetilde M})(D_{\widetilde{M}}- S_{\widetilde{M}})^{-1}: \Lambda^{even}(\widetilde{M})\to \Lambda^{even}(\widetilde{M})
   \]
    belongs to $(C^*(\widetilde{M})^{G})^+$, thus defines a class in $K_1(C^*(\widetilde{M})^{G})$ denoted by $\Ind(D^{sgn}_{M})$;
  \item the higher index of the signature operator is equal to $\Ind(D^{sgn}_{M})$.
  \end{enumerate}

\paragraph{Even case.}
The even case is parallel.  When $m$ is even, the signature operator $D_M^{sgn}$ is $D_{\widetilde{M}}$ together with the grading operator $S_{\widetilde{M}}$.

It is shown in \cite{HR1} and \cite{HR2} that
\begin{enumerate}
  \item $D_{\widetilde{M}}\pm S_{\widetilde{M}}$ are both invertible;
  \item Let $P_{+}(D_{\widetilde{M}}\pm S_{\widetilde{M}}) $ be the positive projections of invertible operators $D_{\widetilde{M}}\pm S_{\widetilde{M}}$ respectively. Then $P_{+}(D_{\widetilde{M}}\pm S_{\widetilde{M}}) $ can be approximated by operators with finite propagation, and
  \[
  P_{+}(D_{\widetilde{M}}+ S_{\widetilde{M}}) - P_{+}(D_{\widetilde{M}}- S_{\widetilde{M}}) \in C^*(\widetilde{M})^G,
  \]
  thus the formal difference
  \[
  [P_{+}(D_{\widetilde{M}}+ S_{\widetilde{M}})] - [P_{+}(D_{\widetilde{M}}- S_{\widetilde{M}}) ]
  \]
  determines a $K$-theory class in $K_0(C^*(\widetilde{M})^{G})$ denoted by $\Ind(D^{sgn}_{M})$;
  \item the higher index of the signature operator is equal to $\Ind(D^{sgn}_{M})$.
  \end{enumerate}

  \subsection{$K$-homology class of signature operator}
	
   In this subsection we recall the definition of $K$-homology class of signature operator according to the parity of $\dim M$.

\paragraph{Odd case.}	Suppose that $M$ is odd dimensional. For any $n\in \mathbb{N}^+$, let $M_{n}$ be the manifold $M$ equipped with metric $g_{n}$, which is $n$-times of the original metric $g$.
Denote by $\coprod_n M_{n}$ the disjoint union of $M_n$.
	The higher index of signature operator $D^{sgn}_{\coprod M_{n}}$ is represented by the invertible operator
	$$T=(D_{\coprod_n \widetilde M_{n}}+S_{\coprod_n \widetilde M_{n}})(D_{\coprod_n \widetilde M_{n}}-S_{\coprod_n \widetilde M_{n}})^{-1}\Big|_{\Lambda^{even}(\coprod_n \widetilde M_{n})}.$$
	
	Denote by $T_n$ the restriction of $T$ to $M_n$. As shown in the previous subsection, $T$ can be approximated by operators with finite propagation on $\coprod_n M_{n}$. Therefore with respect to the original metric $g$, $\{T_n\}$ can be approximated by a sequence of operators uniformly whose propagation goes to zero as $n$ goes to infinity.
	
	To make the above sequence a continuous path on $[1,+\infty)$, we further consider $\coprod_n M_{n+r}$ for $r\in[0,1]$ rather than $\coprod_n M_{n}$, where the metric on $M_{n+r}$ is $(n+r)$-times of the original metric. This actually gives an invertible element in $C^*_L(\widetilde M)^G$.
	\begin{definition}\label{def:K-hom sig odd}
		We call the $K$-theory element in $K_1(C^*_L(\widetilde{M})^G)$ represented by the invertible element defined above the $K$-homology class of signature operator, which will be denoted by $[D^{sgn}_M]$.
	\end{definition}

\paragraph{Even case.}We sketch the construction of $K$-homology class of signature operator for the case of $m$ being even but leave out the details.
 The higher index of signature operator $D^{sgn}_{\coprod M_{n}}$ is determined by the difference of projections
	$$P_+(D_{\coprod \widetilde M_{n}}+S_{\coprod \widetilde M_{n}})-P_+(D_{\coprod \widetilde M_{n}}-S_{\coprod \widetilde M_{n}}).$$
Again, we consider the higher index of signature operator on $\coprod_n M_{n+r}$ for $r\in[0,1]$. This gives us a continuous path from $[1,+\infty)$ to $C^*(\widetilde M)^G$, which in turn defines a path of difference of projections lies in $C^*_L(\widetilde M)^G$, denoted by $P_{M,+}(t)-P_{M,-}(t)$, with respect to the original metric $g$.
\begin{definition}\label{def:K-hom sig even}
		We call the $K$-theory element determined by the formal difference $[P_{M,+}(t)]-[P_{M,-}(t)]$ the $K$-homology class of signature operator $D^{sgn}_M$, which will be denoted by $[D^{sgn}_M]\in K_0(C^*_L(\widetilde{M})^G)$.
	\end{definition}

	\subsection{Controlled homotopy equivalence and higher rho invariant}

    In this subsection, we recall the construction of higher rho invariant of signature operator associated to homotopy equivalence.

    Higher rho invariant associated to smooth homotopy equivalence was first introduced by Higson and Roe in \cite{HR1,HR2,HR3}. Later in \cite{PS16}, Piazza and Schick gave an index theoretic definition of higher rho invariant of signature operator. In \cite{Z17}, Zenobi extended Higson and Roe, Piazza and Schick's work to define higher rho invariant associated to topological homotopy equivalence.

	In \cite{WXY16}, Weinberger, Xie and Yu constructed higher rho invariant of signature operator associated to homotopy equivalence by piecewise-linear approach. In this paper, we adapt their construction to give a differential geometric approach to the definition of higher rho invariant. It is not hard to see that our construction here is equivalent to the one given in \cite{XY19}.  

\begin{definition}\label{def controlled homotopy equivalencce}
	Let $M'$ and $M$ be two Riemannian manifolds. Let $f: M'\to M$ be a smooth homotopy equivalence with smooth homotopy inverse $g:M\to M'$. Denote by $h_t', t\in [0,1]$ (resp. $h_t,t\in [0,1]$) the smooth homotopy between $fg$ and $\text{id}_{M'}$ (resp. $gf$ and $\text{id}_M$). We say that $f$ is a controlled homotopy equivalence if there exists a positive constant $C$ such that
\begin{enumerate}
\item the diameter of $\{h'_t(a)| 0\leq t\leq 1 \}$ is bounded by $C$ uniformly for all $a\in M'$,
\item the diameter of $\{h_t(b)| 0\leq t\leq 1  \}$ is bounded by $C$ uniformly for all $b\in M$.
\end{enumerate}
\end{definition}
\begin{remark}
	If $M'$ and $M$ are closed manifold, then any homotopy equivalence $f:M'\to M$ is automatically controlled. Furthermore, the lift of $f$ to their Galois covering is also controlled.
\end{remark}

	Let $f: M'\to M$ be a controlled homotopy equivalence.
	Suppose that $\widetilde{M'}, \widetilde{M}$ are Galois $G$-covering spaces of $M,M'$ respectively. Write
	$D=D_{\widetilde{M'}}\oplus D_{\widetilde{M}}$, and $S=
	\left(
	\begin{array}{cc}
	S_{\widetilde{M'}} &  \\
	& -S_{\widetilde{M}}
	\end{array}
	\right)$ acting on $\Lambda^*(\widetilde M')\otimes \Lambda^*(\widetilde M)$.
	The controlled homotopy equivalence $f: M'\to M$ induces a map from $\Lambda^p(\widetilde{M})$ to
	$\Lambda^p(\widetilde{M'})$, which we will still denote by $f$.
	In general, the induced map is not a bounded operator. However,  we may apply the Hilsum-Skandalis submersion (cf:\cite[Page74]{HS92}, \cite[Page 157]{W13}, and \cite[Page 34]{XY19}) to construct a bounded operator $\mathcal{T}_f$ out of $f$. Without loss of generality, we might as well assume that $f$ is a bounded operator.
	
Now let us recall the definition of higher rho invariant associated to controlled homotopy equivalence $f$ according to the parity of $\dim M$. We mention here that the construction is due to Higson and Roe (cf: \cite{HR1}, \cite{HR3}), and Weinberger, Xie and Yu (cf: \cite{WXY16}).

\paragraph{Odd case} We first assume that both $M'$ and $M$ are odd dimensional.
	Via conjugating by $f$ on the first summand $\Lambda^*(\widetilde M')$, we may identify $D$ and $S$ with their corresponding operators acting on $\Lambda^*(\widetilde M)\oplus \Lambda^*(\widetilde M)$.
	Under this identification, the invertible element defined by
	$$(D+S)(D-S)^{-1}\Big|_{\Lambda^{even}(\widetilde{M'})\oplus \Lambda^{even}(\widetilde{M})}\in M_2(C^*_L(\widetilde M)^G)^+$$
	represents the $K$-theory element $f_*\Ind(D^{sgn}_{M'})-\Ind(D^{sgn}_{M})\in K_1(C^*(\widetilde M)^G)$.
	The construction in Definition \ref{def:K-hom sig odd} gives rise to an invertible element
	$$(D_t+S_t)(D_t-S_t)^{-1}\Big|_{\Lambda^{even}(\widetilde{M'})\oplus \Lambda^{even}(\widetilde{M})}\in M_2(C^*_L(\widetilde M)^{G,+}).$$
	In particular, we have $D_1=D$ and $S_1=S$.
	Since $f$ is a controlled homotopy equivalence, it gives rise to a canonical path that connects $(D+S)(D-S)^{-1}$ to the identity operator as shown by Higson and Roe in\cite{HR1}. The path is constructed out of the following path $S_f(t)$ connecting $S$ with $-S$,	\begin{equation}\label{eq:Sf(t)}S_f(t)=
	\begin{cases}
	\begin{pmatrix}
	(1-t)S_{\widetilde{M'}} + t f^* S_{\widetilde{M}} f & 0\\
		0 &  -S_{\widetilde{M}}
	\end{pmatrix}&t\in[0,1]\\
	\begin{pmatrix}
		\cos (\frac{\pi}{2}(t-1)) f^* S_{\widetilde{M}} f & \sin (\frac{\pi}{2}(t-1)) f^* S_{\widetilde{M}}  \\
		\sin (\frac{\pi}{2}(t-1))   S_{\widetilde{M}} f   & - \cos (\frac{\pi}{2}(t-1)) S_{\widetilde{M}}
	\end{pmatrix}&t\in [1,2]\\
	\begin{pmatrix}
	0 &  e^{i\pi (t-2)}S_{\widetilde{M}} f\\
	e^{-i\pi (t-2)}f^*S_{\widetilde{M}}   & 0
	\end{pmatrix}&t\in[2,3]\\
	-\begin{pmatrix}
		0 &  e^{i\pi (4-t)}S_{\widetilde{M}} f\\
		e^{-i\pi (4-t)}f^*S_{\widetilde{M}}   & 0
	\end{pmatrix}&t\in[3,4]\\
	-\begin{pmatrix}
		\cos (\frac{\pi}{2}(5-t)) f^* S_{\widetilde{M}} f & \sin (\frac{\pi}{2}(5-t)) f^* S_{\widetilde{M}}  \\
		\sin (\frac{\pi}{2}(5-t))   S_{\widetilde{M}} f   & - \cos (\frac{\pi}{2}(5-t)) S_{\widetilde{M}}	
	\end{pmatrix}&t\in[4,5]\\
	-\begin{pmatrix}
		(t-5)S_{\widetilde{M'}} + (6-t) f^* S_{\widetilde{M}} f & 0\\
		0 &  -S_{\widetilde{M}}
	\end{pmatrix}&t\in[5,6].
	\end{cases}
	\end{equation}
%
	For any $t\in[0,6]$, $S_f(t)$ satisfying the following conditions:
\begin{enumerate}
  \item $D\pm S_{f}(t)$ are both invertible;
  \item the invertible operator
  \[\frac{D+ S}{D- S_f(t)}: \Lambda^{even}(\widetilde{M})\oplus \Lambda^{even}(\widetilde{M}')\to \Lambda^{even}(\widetilde{M})\oplus \Lambda^{even}(\widetilde{M}')\]
   belongs to $M_2(C^*(\widetilde{M})^{G})^+$.
  \end{enumerate}
	\begin{definition}\label{def:higher rho odd}
		The higher rho invariant $\rho(f)$ is the $K$-theory class in $K_1(C^*_{L,0}(\widetilde M)^G)$ represented by the following invertible element
		\begin{equation}
		\begin{cases}
		(D+S)(D+S_f(t-1))^{-1}\Big|_{\Lambda^{even}(\widetilde{M'})\oplus \Lambda^{even}(\widetilde{M})},
		& t\in[1,7],\\
		(D_{t-6}+S_{t-6})(D_{t-6}-S_{t-6})^{-1}\Big|_{\Lambda^{even}(\widetilde{M'})\oplus \Lambda^{even}(\widetilde{M})},
		& t\geqslant 7.
		\end{cases}
		\end{equation}
	\end{definition}

\paragraph{Even case.}
The even dimensional case is parallel to the odd case above.
The construction in Definition \ref{def:K-hom sig even} gives rise to a path of difference of projections
	$$P_+(D_t+S_t)-P_+(D_t-S_t)\in M_2(C^*_L(\widetilde M)^{G}),$$ with
$$P_+(D_1+S_1)-P_+(D_1-S_1)=P_+(D+S)-P_+(D-S).$$
Let $S_f(t)$ be as above. Similarly, we have that:
\begin{enumerate}
  \item $D\pm S_{f}(t)$ are both invertible;
  \item $P_{+}(D \pm S_f(t)) $ are of finite propagation, and
  \[
  P_{+}(D+ S) - P_{+}(D- S_f(t)) \in M_2( C^*(\widetilde{M})^{G}).
  \]
    \end{enumerate}
  Thus the formal difference
  \[
  [P_{+}(D+ S)] - [P_{+}(D- S_f(t)) ]
  \]
  determines a $K$-theory class in $K_0(C^*(\widetilde{M})^{G})$;

  The higher rho invariant associated to controlled homotopy equivalence $f$ is defined as follows.
  \begin{definition}\label{def:higher rho even}
  	Write
  	\begin{equation}
  	\begin{split}
  	\Theta_{f, +}(t)=\begin{cases}
  	P_+(D+S)
  	& t\in[1,7]\\
  	P_+(D_{t-6}+S_{t-6})
  	& t\geqslant 7
  	\end{cases}\\
  	\Theta_{f,-}(t)=\begin{cases}
  	P_+(D+S_f(t-1))
  	& t\in[1,7]\\
  	P_+(D_{t-6}-S_{t-6})
  	& t\geqslant 7
  	\end{cases}
  	\end{split}
  	\end{equation}
As $\Theta_{f, \pm}(t)$ are projections and their difference lies in $M_2(C^*_{L,0}(\widetilde{M})^{G})$, the formal difference
\[
[\Theta_{f, +}]-[\Theta_{f,-}]
\]
defines a $K$-theory class $\rho(f)$ in $K_0(C^*_{L,0}(\widetilde M)^G)$, called higher rho invariant.
	\end{definition}


	\section{Product formula}\label{sec:product formula}
	In this section, we will prove the product formula for higher rho invariant associated to the signature operator for homotopy equivalence. We only consider the case for product of manifolds for now. The general case for fibered manifolds will be discussed in the next section.

\begin{proposition}\label{prop pro for of sig of pro mfld}
Let $M,\ N$ be two manifolds with dimension $m, \ n$ and fundamental groups $G,H$ respectively. Under the product map
\[
\psi: K_m(C^*(\widetilde{M})^G)\otimes K_n(C^*(\widetilde{N})^{H})\to K_{m+n} (\widetilde{M}\times \widetilde{N})^{G\times H} ),
\]
we have
\[
k_{mn}\cdot\psi(\Ind(D^{sgn}_{M})\otimes \Ind(D^{sgn}_{N})) =\Ind(D^{sgn}_{M\times N}),
\]
where
\[
k_{mn}=
\begin{cases}
1, & mn\ \text{is even},\\
2, & mn\ \text{is odd}.
\end{cases}
\]
\end{proposition}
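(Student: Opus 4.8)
The plan is to reduce the statement to an algebraic comparison of the Higson--Roe representatives of the two sides, using the graded tensor product decomposition of the de~Rham complex of a product. Under the canonical identification $\Lambda^*(\widetilde{M}\times\widetilde{N})\cong\Lambda^*(\widetilde{M})\hat{\otimes}\Lambda^*(\widetilde{N})$ one has $D_{\widetilde{M}\times\widetilde{N}}=D_{\widetilde{M}}\hat{\otimes}1+1\hat{\otimes}D_{\widetilde{N}}$ as a graded sum, while the symmetry $S_{\widetilde{M}\times\widetilde{N}}$ can be written in terms of $S_{\widetilde{M}}$, $S_{\widetilde{N}}$ and the even/odd gradings by comparing the Hodge stars on the two sides and the normalizing powers of $i$ in the definition of $S$. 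First I would record this decomposition, keeping track of the signs; the anticommutation $D_{\widetilde{M}\times\widetilde{N}}S_{\widetilde{M}\times\widetilde{N}}+S_{\widetilde{M}\times\widetilde{N}}D_{\widetilde{M}\times\widetilde{N}}=0$ and the relation $S_{\widetilde{M}\times\widetilde{N}}^2=1$ hold because $M\times N$ is itself a Riemannian manifold, so the representatives recalled in Subsection~\ref{sus:higher sig} are available on the product.

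Next I would make the product map $\psi$ of \eqref{eq:K-product} explicit on cycles: since the tensor product of a finite-propagation $G$-equivariant operator and a finite-propagation $H$-equivariant operator is a finite-propagation $G\times H$-equivariant operator on $\widetilde{M}\times\widetilde{N}$, the external product of $K$-theory classes is induced by the graded tensor product of operators as in \eqref{eq:K-product}. The heart of the proof is then to produce a homotopy — inside the invertibles in the odd case, and inside pairs of finite-propagation projections in the even case — connecting the representative of $\Ind(D^{sgn}_{M\times N})$ built from $D_{\widetilde{M}\times\widetilde{N}}\pm S_{\widetilde{M}\times\widetilde{N}}$ to the tensor product of the factor representatives of $\Ind(D^{sgn}_M)$ and $\Ind(D^{sgn}_N)$. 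Concretely one interpolates the two ways of assembling $D_{\widetilde{M}}\hat{\otimes}1$, $1\hat{\otimes}D_{\widetilde{N}}$ and $S_{\widetilde{M}\times\widetilde{N}}$, using at each stage the identity $(D\pm S)^2=D^2+1$ to remain invertible; this is the signature counterpart of the multiplicativity of the index of a product Dirac operator, and recovers at the level of higher index the formula of Wahl \cite{Wahl10}.

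I would then run the comparison according to the parity of $(m,n)$. When $mn$ is even at least one of $S_{\widetilde{M}}$, $S_{\widetilde{N}}$ is diagonal for the even/odd grading, the relevant external product is of type $K_0\otimes K_0\to K_0$ or $K_0\otimes K_1\to K_1$ (equivalently $K_1\otimes K_0\to K_1$), and the homotopy above matches the two representatives directly, giving $k_{mn}=1$. When $mn$ is odd both $S_{\widetilde{M}}$ and $S_{\widetilde{N}}$ are off-diagonal, both factor classes lie in $K_1$, and one is computing the product $K_1\otimes K_1\to K_0$; it is here that a multiplicity $2$ appears, which is the content of the constant $k_{mn}=2$.

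The main obstacle is exactly this factor of $2$ in the odd--odd case. It reflects the fact that when both dimensions are odd the product signature data carries an extra symmetry — absent as soon as one of the $S$'s is diagonal — under which the even representative $[P_+(D_{\widetilde{M}\times\widetilde{N}}+S_{\widetilde{M}\times\widetilde{N}})]-[P_+(D_{\widetilde{M}\times\widetilde{N}}-S_{\widetilde{M}\times\widetilde{N}})]$ splits into two copies of the external product representative. To make this rigorous I would identify the symmetry explicitly and check that the splitting is implemented by $G\times H$-equivariant operators of finite propagation, so that the identity $\Ind(D^{sgn}_{M\times N})=2\,\psi(\Ind(D^{sgn}_M)\otimes\Ind(D^{sgn}_N))$ holds in $K_0(C^*(\widetilde{M}\times\widetilde{N})^{G\times H})$ rather than merely at the level of Hilbert spaces. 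The most delicate bookkeeping is in the signs and phases defining $S_{\widetilde{M}\times\widetilde{N}}$, which must be tracked precisely for both the splitting and the multiplicity to come out correctly.
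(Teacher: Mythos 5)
Your treatment of the two mixed-parity cases ($m$ even and $n$ odd, or vice versa) is essentially the paper's own argument: decompose $\Lambda(\widetilde M\times\widetilde N)\cong\Lambda(\widetilde M)\hat\otimes\Lambda(\widetilde N)$, write $D_{\widetilde M\times\widetilde N}=D_{\widetilde M}\otimes 1+E_{\widetilde M}\otimes D_{\widetilde N}$, and deform the resulting representative of $\Ind(D^{sgn}_{M\times N})$ through invertible, finite-propagation cycles until it is visibly the external product of the factor representatives recalled in Section \ref{sus:higher sig}. The genuine gap is in the two same-parity cases. For $m,n$ both odd, the entire content of the constant $k_{mn}=2$ is concentrated in your final paragraph, and what you offer there is a statement of intent rather than an argument: you neither exhibit the claimed extra symmetry, nor explain why it commutes with $D_{\widetilde M\times\widetilde N}\pm S_{\widetilde M\times\widetilde N}$ (so that it descends to the spectral projections $P_+$), nor why the resulting splitting of $[P_+(D+S)]-[P_+(D-S)]$ into two copies of the external product cycle holds in $K_0(C^*(\widetilde M\times\widetilde N)^{G\times H})$. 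Worse, your interpolation scheme presupposes a cycle-level formula for the external product: such a formula is available for $K_0\otimes K_0$ and $K_0\otimes K_1$ (tensoring a difference of projections against an invertible, which is exactly what the paper's mixed-parity computations use), but the product $K_1\otimes K_1\to K_0$ relevant to the odd--odd case has no such naive description --- it passes through Bott periodicity --- so ``interpolating the two ways of assembling'' is not even well posed there. Similarly, in the even--even case the external product of two formal differences of projections is an alternating sum of four classes, $[p_1\otimes p_2]-[p_1\otimes q_2]-[q_1\otimes p_2]+[q_1\otimes q_2]$, and comparing this with the two-term difference $[P_+(D+S)]-[P_+(D-S)]$ requires a genuine $K$-theoretic identity, not the direct homotopy matching you assert.

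The paper closes precisely these two cases by a device your proposal does not contemplate: suspension by $\mathbb{R}$. It quotes from \cite{WXY16} the formulas $\Ind(D^{sgn}_{N})\otimes\Ind(D^{sgn}_{\mathbb R})=\Ind(D^{sgn}_{N\times\mathbb R})$ for $N$ even and $2\,\Ind(D^{sgn}_{N})\otimes\Ind(D^{sgn}_{\mathbb R})=\Ind(D^{sgn}_{N\times\mathbb R})$ for $N$ odd, then combines associativity of the external product, the already-proved mixed-parity cases applied to $M\times(N\times\mathbb R)$, and cancellation of $\Ind(D^{sgn}_{\mathbb R})$, the generator of $K_1(C^*(\mathbb R))\cong\mathbb Z$, to conclude; in particular the factor $2$ is imported from the odd-times-$\mathbb{R}$ formula rather than established by an explicit doubling. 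If you want a self-contained direct proof along your lines, you must actually construct that doubling --- work comparable to the computations in \cite{WXY16} or \cite{Wahl10}. A secondary point: the paper deliberately avoids ever using $S_{\widetilde M}^2=1$, precisely so that the same computation later applies to Proposition \ref{prop product localized sig}, Theorem \ref{theo product of higher rho} and the fibered setting, where the relevant duality operator (for instance $S_f(t)$ in the higher rho construction) is not a symmetry; your homotopies invoke $(D\pm S)^2=D^2+1$, which is legitimate for the proposition as stated but would not carry over to those generalizations.
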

\begin{proof}
In the following, we omit the mention of $\psi$ for simplicity. We avoid to use the fact that $S_{\widetilde{M}}^2=1$ throughout the proof for the purpose of further generalization. Therefore, we have to consider four cases according to the parity of both $\dim M$ and $\dim N$.

	\paragraph{Even times odd.}
We first suppose that $\dim M$ is even and $\dim N$ is odd.

Write $B_{\widetilde M\pm}=D_{\widetilde M}\pm S_{\widetilde M}$ for short. On the product manifold $\widetilde M\times\widetilde N$, the differential operator $d_{\widetilde M\times\widetilde N}$ is given by
$$d_{\widetilde M\times\widetilde N}=
d_{\widetilde M}\hat\otimes 1+1\hat\otimes d_{\widetilde N}
=d_{\widetilde M}\otimes 1+E_{\widetilde M}\otimes d_{\widetilde N},$$
where $E_{\widetilde M}$ is the even-odd grading operator for $\Lambda(\widetilde M)$.
Therefore
$$D_{\widetilde M\times\widetilde N}=D_{\widetilde M}\otimes 1+E_{\widetilde M}\otimes D_{\widetilde N}.$$

Now we decompose $\Lambda^{even}(\widetilde M\times\widetilde N)$ into the direct sum of
$\Lambda^{even}(\widetilde M)\otimes\Lambda^{even}(\widetilde N)$ and
$\Lambda^{odd}(\widetilde M)\otimes\Lambda^{odd}(\widetilde N)$. As $\dim N$ is odd, the Hodge $*$-operator as well as the Poincar\'e duality operator $S_{\widetilde N}$ reverses the parity of $\Lambda(\widetilde N)$. Note that $S_{\widetilde N}^2:\Lambda^{p}(\widetilde N)\to\Lambda^{p}(\widetilde N)$ is a multiple of identity. Therefore we identify
$\Lambda(\widetilde M)\otimes\Lambda^{odd}(\widetilde N)$ as
$\Lambda(\widetilde M)\otimes\Lambda^{even}(\widetilde N)$ via $1\otimes S_{\widetilde N}$. Under this identification, the higher index of signature operator is represented by the following invertible operator
\begin{eqnarray*}
	(B_{\widetilde M}^+\otimes 1+1\otimes S_{\widetilde N}D_{\widetilde N})(B_{\widetilde M}^-\otimes 1+1\otimes S_{\widetilde N}D_{\widetilde N})^{-1}:\\
	\Lambda(\widetilde M)\otimes\Lambda^{even}(\widetilde N)\to
	\Lambda(\widetilde M)\otimes\Lambda^{even}(\widetilde N).
\end{eqnarray*}

Since $B_{\widetilde M}^+$ is invertible, we define a path of bounded operators
$$W_{+,s}=\frac{B_{\widetilde M}^+}{|B_{\widetilde M}^+|^s}\otimes 1+1\otimes S_{\widetilde N}D_{\widetilde N}.$$

For the invertible operator $B_{\widetilde M}^\pm$, we denote by $P^+(B_{\widetilde M}^\pm)$ (resp. $P^-(B_{\widetilde M}^\pm)$) the spectral projection of the positive (resp. negative) part of $B_{\widetilde M}^\pm$. We see that
$$W_{+,0}=B_{\widetilde M}^+\otimes 1+1\otimes S_{\widetilde N}D_{\widetilde N},\ W_{+,1}=(P^+(B_{\widetilde M}^+)-P^-(B_{\widetilde M}^+))\otimes 1+1\otimes S_{\widetilde N}D_{\widetilde N}.$$
Since $D_{\widetilde N}$ anti-commutes with $S_{\widetilde N}$, we have
$$W_{+,s}^*W_{+,s}=(B_{\widetilde M}^+)^{2(1-s)}\otimes 1+1\otimes D_{\widetilde N}^2>0.$$
Thus $W_{+,s}$ is a path of invertible operator for every $s$ in $[0,1]$.

Similarly we define a path of invertible operator $W_{-,s}$. Thus via the homotopy $W_{+,s}(W_{-,s})^{-1}$, the higher index of signature operator on $M\times N$ is also represented by the invertible operator
$$W_{+,1}(W_{-,1})^{-1}:
\Lambda(\widetilde M)\otimes\Lambda^{even}(\widetilde N)\to
\Lambda(\widetilde M)\otimes\Lambda^{even}(\widetilde N).$$
 We rewrite the expression above using $1=P^+(B_{\widetilde M}^\pm)+P^-(B_{\widetilde M}^\pm)$.
\begin{align*}
&[W_{+,1}(W_{-,1})^{-1}]\\
=&[P^+(B_{\widetilde M}^+)\otimes(S_{\widetilde N}D_{\widetilde N}+1)(S_{\widetilde N}D_{\widetilde N}-1)^{-1}+P^-(B_{\widetilde M}^+)\otimes 1]\\
&-[P^+(B_{\widetilde M}^-)\otimes(S_{\widetilde N}D_{\widetilde N}+1)(S_{\widetilde N}D_{\widetilde N}-1)^{-1}+P^-(B_{\widetilde M}^-)\otimes 1]\\
=&([P^+(B_{\widetilde M}^+)]-[P^+(B_{\widetilde M}^-)])\times
[(D_{\widetilde N}+S_{\widetilde N})(D_{\widetilde N}-S_{\widetilde N})^{-1}]\\
=&\Ind(D^{sgn}_M)\times \Ind(D^{sgn}_N).
\end{align*}
The last two equalities follow from the definition of product of $K$-groups and the formula of the higher index of signature operator in Section \ref{sus:higher sig}.
\paragraph{Odd times even.} Suppose that $M$ is odd dimensional and $N$ is even dimensional. Straightforward computation shows that
    \begin{enumerate}
    \item $S_{\widetilde{M}\times \widetilde{N}} = S_{\widetilde{M}} \otimes S_{\widetilde{N}} $ on $\Lambda(\widetilde{M})\otimes \Lambda^{even}(\widetilde{N}) $,
    \item $S_{\widetilde{M}\times \widetilde{N}} = -S_{\widetilde{M}} \otimes S_{\widetilde{N}} $ on $\Lambda(\widetilde{M})\otimes \Lambda^{odd}(\widetilde{N}) $.
    \end{enumerate}
    and
    \begin{enumerate}
    \item $d_{\widetilde{M}\times \widetilde{N}} = d_{\widetilde{M}} \otimes 1 + 1\otimes d_{\widetilde{N}} $ on $\Lambda(\widetilde{M})^{odd}\otimes \Lambda(\widetilde{N}) $,
    \item $d_{\widetilde{M}\times \widetilde{N}} = d_{\widetilde{M}} \otimes 1 -1 \otimes  d_{\widetilde{N}} $ on $\Lambda(\widetilde{M})^{even}\otimes \Lambda(\widetilde{N}) $.
    \end{enumerate}
  Let $\Lambda_{\pm}(\widetilde{N})$ be the $\pm 1$ eigenspace of $S_{\widetilde{N}}$. We make the following identifications
  \begin{enumerate}
  \item Under the decomposition
  $$
  \Lambda^{odd}(\widetilde{M})\otimes \Lambda(\widetilde{N})= \begin{matrix}
  \Lambda^{odd}(\widetilde{M})\otimes \Lambda_+^{odd}(\widetilde{N})\\
  \oplus\\
    \Lambda^{odd}(\widetilde{M})\otimes \Lambda_-^{odd}(\widetilde{N})\\
  \oplus\\
    \Lambda^{odd}(\widetilde{M})\otimes \Lambda_+^{even}(\widetilde{N})\\
  \oplus\\
    \Lambda^{odd}(\widetilde{M})\otimes \Lambda_-^{even}(\widetilde{N})
  \end{matrix}
  \text{ and }
   \Lambda^{odd}(\widetilde{M}\times \widetilde{N})= \begin{matrix}
  \Lambda^{even}(\widetilde{M})\otimes \Lambda_+^{odd}(\widetilde{N})\\
  \oplus\\
    \Lambda^{even}(\widetilde{M})\otimes \Lambda_-^{odd}(\widetilde{N})\\
  \oplus\\
    \Lambda^{odd}(\widetilde{M})\otimes \Lambda_+^{even}(\widetilde{N})\\
  \oplus\\
    \Lambda^{odd}(\widetilde{M})\otimes \Lambda_-^{even}(\widetilde{N})
    \end{matrix},
  $$
we identify $  \Lambda^{odd}(\widetilde{M})\otimes \Lambda(\widetilde{N})$ with $ \Lambda^{odd}(\widetilde{M}\times \widetilde{N})$ by
\[
\begin{pmatrix}
- B_{\widetilde{M}+} \otimes 1 & & & \\
& B_{\widetilde{M}-}\otimes 1 & & \\
& &  1\otimes 1  & \\
& & & 1\otimes 1
\end{pmatrix}:
\Lambda^{odd}(\widetilde{M})\otimes \Lambda(\widetilde{N})\to \Lambda^{odd}(\widetilde{M}\times \widetilde{N}).
\]

\item Under the decomposition
 $$ \Lambda^{even}(\widetilde{M}\times N)= \begin{matrix}
  \Lambda^{even}(\widetilde{M})\otimes \Lambda_+^{even}(\widetilde{N})\\
  \oplus\\
    \Lambda^{even}(\widetilde{M})\otimes \Lambda_-^{even}(\widetilde{N})\\
  \oplus\\
    \Lambda^{odd}(\widetilde{M})\otimes \Lambda_+^{odd}(\widetilde{N})\\
  \oplus\\
    \Lambda^{odd}(\widetilde{M})\otimes \Lambda_-^{odd}(\widetilde{N})
    \end{matrix}
  \text{ and }
  \Lambda^{even}(\widetilde{M})\otimes \Lambda(N)= \begin{matrix}
  \Lambda^{even}(\widetilde{M})\otimes \Lambda_+^{even}(\widetilde{N})\\
  \oplus\\
    \Lambda^{even}(\widetilde{M})\otimes \Lambda_-^{even}(\widetilde{N})\\
  \oplus\\
    \Lambda^{even}(\widetilde{M})\otimes \Lambda_+^{odd}(\widetilde{N})\\
  \oplus\\
    \Lambda^{even}(\widetilde{M})\otimes \Lambda_-^{odd}(\widetilde{N})
  \end{matrix},$$
  we identify $  \Lambda^{even}(\widetilde{M}\times \widetilde{N})$ with $\Lambda^{even}(\widetilde{M})\otimes \Lambda(\widetilde{N})$ by
  \[
  \begin{pmatrix}
  1\otimes 1 & & & \\
  & -1\otimes 1 & & \\
  & &  - B_{\widetilde{M}-}\otimes 1 & \\
  & & &  - B_{\widetilde{M}+}\otimes 1
  \end{pmatrix}: \Lambda^{even}(\widetilde{M}\times \widetilde{N})\to  \Lambda^{even}(\widetilde{M})\otimes \Lambda(\widetilde{N}).
  \]
  \end{enumerate}

With these identifications, we have
\[
d_{\widetilde{M}\times \widetilde{N}} + d^*_{\widetilde{M}\times \widetilde{N}}+S_{\widetilde{M}\times \widetilde{N}}=\left
\{\begin{array}{cc}
B_{\widetilde{M}+} \otimes 1 + B_{\widetilde{M}+} \otimes D_{\widetilde{N}} & \text{on}\ \Lambda^{odd}(\widetilde{M})\otimes \Lambda^{even}_+(\widetilde{N})\\
B^2_{\widetilde{M}-} B_{\widetilde{M}+}\otimes 1 + B_{\widetilde{M}+} \otimes D_{\widetilde{N}} & \text{on}\ \Lambda^{odd}(\widetilde{M})\otimes \Lambda^{odd}_+(\widetilde{N})\\
-B_{\widetilde{M}-} \otimes 1 + B_{\widetilde{M}-} \otimes D_{\widetilde{N}} & \text{on}\ \Lambda^{odd}(\widetilde{M})\otimes \Lambda^{even}_-(\widetilde{N})\\
-B^2_{\widetilde{M}+} B_{\widetilde{M}-}\otimes 1 + B_{\widetilde{M}-} \otimes D_{\widetilde{N}} & \text{on}\ \Lambda^{odd}(\widetilde{M})\otimes \Lambda^{odd}_+(\widetilde{N})
\end{array}
\right.
\]
Note that $B^2_{\widetilde{M}\pm}$ are positive invertible operators. It follows that $d_{\widetilde{M}\times \widetilde{N}} + d^*_{\widetilde{M}\times \widetilde{N}}+S_{\widetilde{M}\times \widetilde{N}}$ is homotopic to
\[
\begin{pmatrix}
B_{\widetilde{M}+} & 0 \\
 0  & B_{\widetilde{M}-}
\end{pmatrix}S_{\widetilde{N}} + \begin{pmatrix}
B_{\widetilde{M}-} & 0\\
0   & B_{\widetilde{M}+}
\end{pmatrix}D_{\widetilde{N}} : \Lambda^{odd}(\widetilde{M})\otimes  \Lambda(\widetilde{N}) \to \Lambda^{even}(\widetilde{M})\otimes  \Lambda(\widetilde{N}),
\]
where the matrix form is written with respect to the decomposition
\[
 \Lambda(\widetilde{N}) =\Lambda_+ (\widetilde{N}) \oplus \Lambda_- (\widetilde{N}).
\]
Since $D_{\widetilde{N}}$ is off-diagonal,  $d_{\widetilde{M}\times \widetilde{N}} + d^*_{\widetilde{M}\times \widetilde{N}}+S_{\widetilde{M}\times \widetilde{N}}$ is in turn homotopic to
\[
V= \begin{pmatrix}
B_{\widetilde{M}+} & 0\\
0   & B_{\widetilde{M}-}
\end{pmatrix}S_{\widetilde{N}}f(D_{\widetilde{N}}) + \begin{pmatrix}
B_{\widetilde{M}-} & 0 \\
0   & B_{\widetilde{M}+}
\end{pmatrix}g(D_{\widetilde{N}})
\]
where $g(x)=\frac{x}{\sqrt{1+x^2}}$ and $f(x)=\frac{1}{\sqrt{1+x^2}}$. In the meantime, $d_{\widetilde{M}\times \widetilde{N}} + d^*_{\widetilde{M}\times \widetilde{N}}-S_{\widetilde{M}\times \widetilde{N}}$ is homotopic to
\[
U= \begin{pmatrix}
B_{\widetilde{M}-} & 0 \\
0   & B_{\widetilde{M}+}
\end{pmatrix}S_{\widetilde{N}}f(D_{\widetilde{N}}) + \begin{pmatrix}
B_{\widetilde{M}-} & 0 \\
0   & B_{\widetilde{M}+}
\end{pmatrix}g(D_{\widetilde{N}}).
\]
It follows that
\[VU^{-1}: \Lambda^{even}(\widetilde{M}\times \widetilde{N}) \to \Lambda^{even}(\widetilde{M}\times \widetilde{N})\]
\[
VU^{-1} = [(d_{\widetilde{M}}+d^*_{\widetilde{M}})\otimes 1 + S_{\widetilde{M}} \otimes S_2S_1S_2 ] \begin{pmatrix}B^{-1}_{\widetilde{M}-} & \\
& B^{-1}_{\widetilde{M}+}\end{pmatrix},
\]
where
\[
S_1= S_{\widetilde{N}}, \ \text{and} \ S_2= g(D_{\widetilde{N}})+ S_{\widetilde{N}}f(D_{\widetilde{N}}).
\]

Note that $ S_2S_1S_2 $ is a symmetry, i.e. $S_2S_1S_2 $ can be approximated by finite propagation operators, and $(S_2S_1S_2)^2-1$ belongs to $C^*(\widetilde{N})^H$. We define
\[
P=\frac{S_2S_1S_2+1}{2}.
\]

Now one can see that the higher index of  $D^{sgn}_{M\times N}$ is actually represented by
\begin{eqnarray*}
&&[(d_{\widetilde{M}}+d^*_{\widetilde{M}})\otimes 1 + S_{\widetilde{M}} \otimes S_2S_1S_2 ] \begin{pmatrix}B^{-1}_{\widetilde{M}-} &0 \\
0& B^{-1}_{\widetilde{M}+}\end{pmatrix}\\
&=& [B_{\widetilde{M}+}\otimes P + B_{\widetilde{M}-} \otimes (1-P) ]( \begin{pmatrix}B^{-1}_{\widetilde{M}-} & 0  \\
0&0 \end{pmatrix}+ \begin{pmatrix}0 &0  \\
0 & B^{-1}_{\widetilde{M}+}\end{pmatrix})\\
&= & \begin{pmatrix} B_{\widetilde{M}+}B^{-1}_{\widetilde{M}-} & 0  \\
0 & 1 \end{pmatrix} \otimes P  + \begin{pmatrix} 1 & 0 \\
0 &B_{\widetilde{M}-}  B^{-1}_{\widetilde{M}+}\end{pmatrix}\otimes (1-P)\\
&=& ( \begin{pmatrix} B_{\widetilde{M}+}B^{-1}_{\widetilde{M}-} &  0\\
0 & B_{\widetilde{M}+}B^{-1}_{\widetilde{M}-}  \end{pmatrix} \otimes P  + \begin{pmatrix} 1 & 0 \\
0 &1\end{pmatrix}\otimes (1-P)) \begin{pmatrix} 1 & 0  \\
0 &B_{\widetilde{M}-}  B^{-1}_{\widetilde{M}+}\end{pmatrix}\\
&=& B_{\widetilde{M}+}B^{-1}_{\widetilde{M}-} \otimes ([P]-
[\begin{pmatrix}
0&0 \\
0& 1
\end{pmatrix}]),
\end{eqnarray*}
which is equal to $\Ind(D^{sgn}_{M})\otimes \Ind(D^{sgn}_{N})$
since
\[
[P]-
[\begin{pmatrix}
0&0 \\
0& 1
\end{pmatrix}]=[P_+(D_{\widetilde{N}}+S_{\widetilde{N}})]-[P_+(D_{\widetilde{N}}-S_{\widetilde{N}})],
\]
as computed in \cite{HR2}.
	\paragraph{Even times even.}
 Suppose that both $M$ and $N$ are even dimensional. In \cite{WXY16}, it is shown that
 \begin{align*}
 \Ind(D^{sgn}_{N})\otimes \Ind(D^{sgn}_{\mathbb R})=& \Ind(D^{sgn}_{N\times \mathbb R}),\\
 \text{Ind}(D_{M\times N}^{sgn})\otimes \text{Ind}(D_{\mathbb{R}}^{sgn})=&\text{Ind}(D_{M\times N\times \mathbb{R}}^{sgn}).
 \end{align*}
Note that $\Ind(D^{sgn}_{\mathbb R})$ is the generator of
\[
 K_1(C^*(\mathbb{R}))\cong K_1(C^*_L(\mathbb{R})) \cong \mathbb{Z}.
\]
Now we have
\begin{align*}
&\text{Ind}(D_M^{sgn}))\otimes \text{Ind}(D_N^{sgn}) \otimes \text{Ind}(D_{\mathbb{R}}^{sgn})\\
=& \text{Ind}(D_M^{sgn}))\otimes\text{Ind}(D_{N\times \mathbb{R}}^{sgn})\\
=& \text{Ind}(D_{M\times N\times \mathbb{R}}^{sgn})\\
=& \text{Ind}(D_{M\times N}^{sgn})\otimes \text{Ind}(D_{\mathbb{R}}^{sgn}).
\end{align*}
It follows that
\[
\text{Ind}(D_M^{sgn})\otimes \text{Ind}(D_N^{sgn})= \text{Ind}(D_{M\times N}^{sgn}).
\]
	\paragraph{Odd times odd.}
Let $M$ and $N$ be both odd dimensional manifolds. In this case, as shown in \cite{WXY16}, we have
\begin{eqnarray*}
2\Ind(D^{sgn}_{N})\otimes \Ind(D^{sgn}_{\mathbb R})&=& \Ind(D^{sgn}_{N\times\mathbb R}),\\
\Ind(D^{sgn}_{M\times N})\otimes \Ind(D^{sgn}_{\mathbb R})&=& \Ind(D^{sgn}_{M\times N\times\mathbb R}).
\end{eqnarray*}
Now we have
\begin{align*}
&2 \text{Ind}(D_M^{sgn}))\otimes \text{Ind}(D_N^{sgn})\otimes \text{Ind}(D_{\mathbb{R}}^{sgn})\\
=& \text{Ind}(D_M^{sgn})\otimes \text{Ind}(D_{N\times \mathbb{R}}^{sgn})\\
=& \text{Ind}(D_{M\times N\times \mathbb{R}}^{sgn})\\
=& \text{Ind}(D_{M\times N}^{sgn})\otimes \text{Ind}(D_{\mathbb{R}}^{sgn}).
\end{align*}
It follows that
\begin{eqnarray*}
2\text{Ind}(D_M^{sgn})\otimes \text{Ind}(D_N^{sgn})= \text{Ind}(D_{M\times N}^{sgn}).
\end{eqnarray*}

\end{proof}
Note that in the proof of Proposition of \ref{prop pro for of sig of pro mfld}, we actually do not demand that $S_{\widetilde M}^2=1$.
It follows that
given the definition of $K$-homology class of signature and higher rho invariant,  the argument above can be easily generalized to show the following Proposition and Theorem:

\begin{proposition}\label{prop product localized sig}
With the same notations, under the product map
		\[
		\psi_L: K_m(C^*_L(\widetilde{M})^G) \otimes K_n(C^*_L(\widetilde{N})^H) \to  K_{m+n} (C^*_L(\widetilde{M}\times \widetilde{N})^{G\times H}),
		\]
		there is a product formula of $K$-homology class of signature operator on $M$ and $N$ goes as follows,
		\[k_{mn}\cdot\psi_{L}([D_M^{sgn}]\otimes [D_N^{sgn}])= [D_{M\times N}^{sgn}]\]
where
\[
k_{mn}=\left\{
\begin{array}{cc}
1, & mn\ \text{is even},\\
2, & mn\ \text{is odd}.
\end{array}
\right.
\]
	\end{proposition}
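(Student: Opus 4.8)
The plan is to exploit the remark made just before the statement: the proof of Proposition \ref{prop pro for of sig of pro mfld} never invokes the relation $S_{\widetilde M}^2=1$, and the $K$-homology class $[D^{sgn}_M]$ is precisely the higher index construction of Section \ref{sus:higher sig} spread continuously over the scale parameter. By Definitions \ref{def:K-hom sig odd} and \ref{def:K-hom sig even}, $[D^{sgn}_M]$ is represented by a path $t\mapsto R_M(t)$ on $[1,\infty)$, where $R_M(t)$ is the Roe-algebra representative of $\Ind(D^{sgn}_M)$ computed on $M$ carrying the rescaled metric $t\cdot g$; measured back in the original metric, $\prop(R_M(t))\to 0$ as $t\to\infty$, so the path lies in $C^*_L(\widetilde M)^G$. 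The same holds for $N$ and for $M\times N$, and the key compatibility is that rescaling the product metric $g_M\oplus g_N$ by $t$ yields exactly the product of the two rescaled metrics $t\cdot g_M$ and $t\cdot g_N$, the metric used to build $[D^{sgn}_{M\times N}]$.

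First I would unwind the definition of $\psi_L$ from \eqref{eq:K-product}: it tensors operators time-slice by time-slice, so $\psi_L([D^{sgn}_M]\otimes[D^{sgn}_N])$ is represented by the path $t\mapsto R_M(t)\otimes R_N(t)$. At each fixed $t$ this is exactly the external tensor product of the two higher-index representatives on the $t$-rescaled manifolds, i.e.\ the object appearing on the left-hand side of Proposition \ref{prop pro for of sig of pro mfld} for the scaled metrics.

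Next I would run the argument of Proposition \ref{prop pro for of sig of pro mfld} at every scale $t$ simultaneously. The decomposition $D_{\widetilde M\times\widetilde N}=D_{\widetilde M}\otimes 1+E_{\widetilde M}\otimes D_{\widetilde N}$ and the identifications used in the four parity cases depend only on the tensor structure of the product metric, so they carry over verbatim with all operators computed at scale $t$. Proposition \ref{prop pro for of sig of pro mfld} then supplies, for each $t$, a homotopy between $k_{mn}\cdot\big(R_M(t)\otimes R_N(t)\big)$ and $R_{M\times N}(t)$. The homotopies used there are all canonical functional-calculus recipes -- the normalization paths $W_{\pm,s}$, the operators $f(D_{\widetilde N})=(1+D_{\widetilde N}^2)^{-1/2}$ and $g(D_{\widetilde N})=D_{\widetilde N}(1+D_{\widetilde N}^2)^{-1/2}$, and the spectral projections $P^{\pm}(B_{\widetilde M}^\pm)$ -- each of which depends continuously on $t$. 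Assembling them over $t$ gives a homotopy in $M_k(C^*_L(\widetilde M\times\widetilde N)^{G\times H})^+$ in the odd range (respectively a homotopy of projection differences in the even range), which establishes the claimed identity in $K$-theory, with the factor $k_{mn}=2$ entering only in the odd-times-odd case.

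The main obstacle is confirming that these parametrized homotopies genuinely stay inside the localization algebra, that is, that propagation remains finite and still decays to $0$ as $t\to\infty$ \emph{uniformly} along the homotopy. Functional calculus applied to $D_{\widetilde N}$ can a priori enlarge propagation, but by finite propagation speed of the wave operator one approximates $f$ and $g$ by functions with compactly supported distributional Fourier transform, yielding operators of finite propagation; since the metric is rescaled by $t$, this propagation measured in the original metric shrinks like $1/t$. The paths $W_{\pm,s}$ and the spectral projections are treated identically, so the estimates of Proposition \ref{prop pro for of sig of pro mfld}, which were pointwise in $t$, upgrade to the uniform-in-$t$ bounds defining $C^*_L$. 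Once this uniform control is secured, the four parity cases conclude exactly as in Proposition \ref{prop pro for of sig of pro mfld}.
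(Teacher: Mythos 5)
Your proposal is correct and is essentially the paper's own proof: the paper disposes of this proposition with the single remark that the argument for Proposition \ref{prop pro for of sig of pro mfld} never uses $S_{\widetilde M}^2=1$ and therefore ``can be easily generalized'' to the localization-algebra setting, given that $[D^{sgn}_M]$ is defined by running the higher-index construction over the rescaled metrics $(n+r)g$. Your write-up simply makes explicit what that remark leaves implicit --- running the four parity cases at every scale $t$ simultaneously and checking that the functional-calculus homotopies retain uniformly decaying propagation --- so it matches the intended argument, down to deferring the even--even and odd--odd cases to the same $\mathbb{R}$-product reduction used in Proposition \ref{prop pro for of sig of pro mfld}.
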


\begin{theorem}\label{theo product of higher rho}
		Suppose that $M',M$ are two closed oriented Riemannian manifolds and $f:M'\to M$ is a homotopy equivalence. Write $m=\dim M'=\dim M$. Let $\widetilde{M'},\ \widetilde{M}$ be theirs Galois $G$-covering spaces respectively. Under the  product map
		\[	\psi_{L,0}: K_m(C^*_L(\widetilde{M})^G) \otimes K_n(C^*_{L,0}(\widetilde{N})^H) \to  K_{m+n} (C^*_{L,0}(\widetilde{M}\times \widetilde{N})^{G\times H}),\]
		there is a product formula goes as follows,
		\[
		k_{mn}\cdot\psi_{L,0} (\rho(f) \otimes [D_N^{sgn}]  )= \rho(f\times I_N),
		\]
		where $I_N: N\to N$ is the identity map, and
\[
k_{mn}=\left\{
\begin{array}{cc}
1, & mn\ \text{is even},\\
2, & mn\ \text{is odd}.
\end{array}
\right.
\]
	\end{theorem}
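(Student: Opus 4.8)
The plan is to deduce this theorem directly from the proof of Proposition \ref{prop pro for of sig of pro mfld} (and its localized refinement Proposition \ref{prop product localized sig}), exploiting the observation recorded in the remark after that proposition: every homotopy and every algebraic identification used there is carried out at the level of the operators $B_{\widetilde M\pm}=D_{\widetilde M}\pm S_{\widetilde M}$, $D_{\widetilde N}$ and $S_{\widetilde N}$, and at no point is the relation $S_{\widetilde M}^2=1$ invoked. The higher rho invariant $\rho(f)$ is, by Definitions \ref{def:higher rho odd} and \ref{def:higher rho even}, built from exactly the same data as the $K$-homology class, except that the fixed Poincar\'e duality operator $S=\mathrm{diag}(S_{\widetilde{M'}},-S_{\widetilde M})$ on $\Lambda^*(\widetilde M')\oplus\Lambda^*(\widetilde M)$ is replaced by the time-dependent family $S_f(t)$ of \eqref{eq:Sf(t)}, which does not square to the identity for intermediate $t$. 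The whole point is therefore that, since $S^2=1$ was never needed, the computations of the proposition should go through verbatim with $S$ replaced by $S_f(t)$.

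First I would write out the product $\psi_{L,0}(\rho(f)\otimes[D_N^{\sgn}])$ concretely. In the odd case $\rho(f)$ is represented by the invertible path of Definition \ref{def:higher rho odd}, which for $t\in[1,7]$ is $(D+S)(D+S_f(t-1))^{-1}$ and for $t\geqslant 7$ is the rescaled-metric continuation; this path lies in $C^*_{L,0}(\widetilde M)^G$ after the Hilsum--Skandalis identification. Tensoring with the localized representative of $[D_N^{\sgn}]$ built from $D_{\widetilde N}$, $S_{\widetilde N}$ lands in $C^*_{L,0}(\widetilde M\times\widetilde N)^{G\times H}$, because $C^*_{L,0}$ is an ideal in $C^*_L$. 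I would then apply, uniformly in the path parameter $t$, the same homotopies used in the proposition: the paths $W_{\pm,s}$ in the even-times-odd case, and the explicit identifications producing $V,U^{-1}$ in the odd-times-even case, now with $B_{\widetilde M\pm}$ replaced by $D\pm S_f(t)$. Because these manipulations never used $S_{\widetilde M}^2=1$, they remain valid and transport the product into a path of invertibles (or a difference of projections) whose duality operator on the product is assembled out of $S_f(t)$ and $S_{\widetilde N}$.

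The essential step is to identify this resulting path with the representative of $\rho(f\times I_N)$. Here I would check that the six-segment path $S_{f\times I_N}(t)$ attached to $f\times I_N$ equals, under the parity-dependent identifications of the proposition (where $S_{\widetilde M\times\widetilde N}=\pm S_{\widetilde M}\otimes S_{\widetilde N}$), the corresponding tensor combination of $S_f(t)$ with $S_{\widetilde N}$. This is plausible because the homotopy data for $f\times I_N$---the inverse $g\times I_N$ and the homotopies $h_t\times I_N$, $h_t'\times I_N$---are precisely the products of the data for $f$ with the identity on $N$, so the Hilsum--Skandalis operator factors as $\mathcal T_{f\times I_N}\simeq\mathcal T_f\otimes 1$ and the path $S_f(t)$ for the product is the product of $S_f(t)$ for $f$ with $S_{\widetilde N}$. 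The numerical factor $k_{mn}$ then enters exactly as in the odd-times-odd case of the proposition, where the doubling $2\,\Ind(D^{\sgn}_N)\otimes\Ind(D^{\sgn}_{\mathbb R})=\Ind(D^{\sgn}_{N\times\mathbb R})$ forces the factor $2$ when $mn$ is odd; for the same-parity cases I would stabilize by $\mathbb R$ as in the proposition.

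I expect the main obstacle to be this last identification at the operator level: verifying that the product of the rho-path with the $N$-class coincides (up to an explicit homotopy within $C^*_{L,0}$) with the rho-path of $f\times I_N$. Two points require care. First, one must keep precise track of the parity-dependent signs in $S_{\widetilde M\times\widetilde N}=\pm S_{\widetilde M}\otimes S_{\widetilde N}$ and of the matching between the segments of $S_f(t)$ and $S_{f\times I_N}(t)$, including the behaviour of the Hilsum--Skandalis operator under products. Second, one must ensure that every homotopy stays inside the obstruction algebra, i.e. that evaluation at the initial time remains trivial throughout, so that the class in $K_{m+n}(C^*_{L,0}(\widetilde M\times\widetilde N)^{G\times H})$ is well defined at each stage; this is where the fact that the proposition's homotopies are applied fiberwise in $t$, and hence commute with the evaluation map $\ev$, is used.
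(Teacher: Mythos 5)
Your proposal is correct and takes essentially the same approach as the paper: the paper's entire proof of this theorem is the observation, recorded just before the statement, that the proof of Proposition \ref{prop pro for of sig of pro mfld} never invokes $S_{\widetilde M}^2=1$, so the same computations go through with $S$ replaced by the path $S_f(t)$ of \eqref{eq:Sf(t)}, including the $\mathbb{R}$-stabilization step that produces $k_{mn}$. Your additional verifications (the factorization of the Hilsum--Skandalis data for $f\times I_N$, and keeping the homotopies inside the obstruction algebra) are details the paper leaves implicit rather than a different route.
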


	\section{Product formula for fibered manifolds}\label{sec:product for fiber}
	In this section, we generalize the product formula given in the previous section to fibered manifolds. We will first introduce a series of family geometric $C^*$-algebras with respect to the fibration. Next we define the family version of $K$-homology class and higher rho invariant of fiberwise signature operator in $K$-theory of these $C^*$-algebras. At last, We will prove Theorem \ref{theo:main 1} and \ref{theo:main 2}.
	\subsection{Family algebras}
	In this subsection, we introduce family geometric $C^*$-algebras associated to a fibered manifold.
	
Let $\pi:E\to B$ be a fibration with fiber $F$ and base space $B$. Assume that $E$, $F$ and $B$ are closed connected oriented Riemannian manifolds. The fibration induces a long exact sequence of homotopy groups
	$$\xymatrix{\cdots\ar[r]&\pi_2(B)\ar[r]^\partial&\pi_1(F)\ar[r]&\pi_1(E)\ar[r]^{\pi_*}&\pi_1(B)\ar[r]&0}
	$$
	Denote by $\widetilde E$ and $\widetilde B$ the universal covering of $E$ and $B$. From the exactness of the above sequence, we see that $\partial(\pi_2(B))$ is a normal subgroup of $\pi_1(F)$. Write $\Gamma=\pi_1(F)/\partial(\pi_2(B))$. The above exact sequence shows that $\widetilde E$ is also a fibration on $\widetilde B$ with fiber projection $\widetilde \pi:\widetilde E\to\widetilde B$ and fiber $\widetilde F$, the Galois $\Gamma$-covering of $F$.
%
	
	From now on, we will write $G=\pi_1(E)$ and $H=\pi_1(B)$ for short. Recall that the equivariant Roe algebras $C^*(\widetilde E)^G$ is defined to be the completion of $G$-equivariant, locally compact operators with finite propagation as in Definition \ref{def roe and localization}. Now let us define the equivariant family Roe algebra.
	
	First we construct an equivariant Roe algebra bundle over $B$. View the fiber bundle $E$ over $B$ as gluing many pieces of local trivialization by a series of diffeomorphisms of $F$. More precisely, there exists an open cover $\{V_\alpha\}$ of $B$ and continuous maps $\varphi_{\alpha\beta}:V_\alpha\cap V_\beta\to \mathrm{Diff}(F)$ such that the fiber bundle $E$ is equivalent to the tuple $(V_\alpha\times F,\varphi_{\alpha\beta})$, that is, every continuous section $s$ of $E$ is equivalent to a series of continuous maps $s_\alpha:V_\alpha\to F$ satisfying the cocycle condition, $\varphi_{\alpha\beta}(x)s_\alpha(x)=s_\beta(x)$ for any $x\in V_\alpha\cap V_\beta$.
	
	By our previous arguments, when turning to the universal covering, $\widetilde E$ is also an $\widetilde F$-bundle over $\widetilde B$. Assume that every open set in $\{V_\alpha\}$ is small enough so that it lifts to a $\pi_1(B)$-equivariant open cover $\{U_j\}$ of $\widetilde B$, each open set of which is homeomorphic to the Euclidean space and trivialize $\widetilde E$. Also, the transition map lifts to $\{\psi_{ij}\}$ with $\psi_{ij}:U_i\cap U_j\to \mathrm{Diff}(\widetilde F)$. Recall the subspace $\mathbb{C}[\widetilde F]^{\Gamma}$ of the equivariant Roe algebra $C^*(\widetilde F)^{\Gamma}$ as in Definition \ref{def:precompletion}. For any $x\in U_i\cap U_j$, $\psi_{ij}(x)$ induces an isomorphism $\psi_{ij,*}(x)$ of $C^*(\widetilde F)^{\Gamma}$ by conjugation, which maps $\mathbb{C}[\widetilde F]^{\Gamma}$ to itself. This induces the following fiber bundle.
	
	\begin{definition}[Equivariant family Roe algebra]\label{def:family roe algebra}
	Recall that $G=\pi_1(E)$ and  $\Gamma=\pi_1(F)/\partial (\pi_2(B))$.	A continuous section of the fiber bundle given by $(\{U_i\},\{\psi_{ij,*}\})$ is defined by a series of norm-continuous maps $s_i:U_i\to C^*(\widetilde F)^{\Gamma}$ satisfying the cocycle condition, $\psi_{ij,*}(x)s_i(x)=s_j(x)$ for any $x\in U_i\cap U_j$.
		Let $\mathbb{C}[\widetilde E,\widetilde B]^G$ be the collection of uniformly norm-bounded and uniformly norm-continuous sections that are invariant under $\pi_1(B)$-action, and have uniformly finite propagation on $\widetilde B$. The norm of such a section $\{s_j\}$ is defined to be $\sup_{j}\sup_{x\in U_j}\|s_j(x)\|$. Denote the completion of $\mathbb{C}[\widetilde E,\widetilde B]^G$ by $C^*(\widetilde E,\widetilde B)^G$.
	\end{definition}

We mentioned that Definition \ref{def:family roe algebra} is related to the ``Groupoid Roe algebra" given by Tang, Willett and Yao ( \cite[Definition 3.6]{TWY16}).
%
	
	
	
	It is easy to verify that the above definition is independent of the local trivialization. Similarly, we define the corresponding equivariant family localization algebra.
	
	\begin{definition}[Equivariant family localization and obstruction algebras]	
		The equivariant family localization algebra $C^*_L(\widetilde{E},\widetilde{B})^G$ is the completion of uniformly norm-bounded and uniformly norm-continuous paths $s:[0,+\infty)\to C^*(\widetilde{E},\widetilde{B})^G$  such that the propagation of $s(t)$ goes to zero as $t$ goes to infinity uniformly on $\widetilde B$,
		where the norm of $s(t)$ is defined to be $\sup_{t\in [0,+\infty)}||s(t)||$.
		The equivariant family obstruction algebra  $C^*_{L,0}(\widetilde{E},\widetilde{B})^G$ is then defined to be the kernel of the family assembly map:
		\begin{eqnarray*}
			\text{ev}: C^*_L(\widetilde{E},\widetilde{B})^G &\to& C^*(\widetilde{E},\widetilde{B})^G \\
			s &\mapsto & s(1).
		\end{eqnarray*}
	\end{definition}
%
	
	\subsection{Product map of $K$-theory}\label{subsec: product map}
	In this subsection, we construct the productive map on the family $C^*$-algebras.
	
	\begin{theorem}\label{thm: product maps}
	Recall that $G=\pi_1(E)$ and $H=\pi_1(B)$. There are product maps
		\begin{eqnarray*}
			\phi : K_m(C^*_L(\widetilde{B})^H)\times K_n(C^*_L(\widetilde{E},\widetilde{B})^G) &\to & K_{m+n}(C^*_L(\widetilde{E})^G)\\
			\phi_0 : K_m(C^*_L(\widetilde{B})^H)\times K_n(C^*_{L,0}(\widetilde{E},\widetilde{B})^G) &\to & K_{m+n}(C^*_{L,0}(\widetilde{E})^G)
		\end{eqnarray*}
	that generalize the maps defined in \eqref{eq:K-product}.
	\end{theorem}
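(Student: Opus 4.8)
The plan is to construct the two products first at the level of the dense $*$-subalgebras of finite-propagation integral operators and then to verify that they extend to the $C^*$-completions and descend to $K$-theory, generalizing the kernel tensor product of \eqref{eq:K-product}. Recall that the homotopy exact sequence yields a group extension $1\to\Gamma\to G\to H\to 1$ with $\Gamma=\pi_1(F)/\partial(\pi_2(B))$; this is the algebraic shadow of the geometric fact that $\widetilde E$ is an $\widetilde F$-bundle over $\widetilde B$. The guiding picture is that an element of $\mathbb{C}[\widetilde B]^H$ is a ``horizontal'' operator moving points within the base, while an element of $\mathbb{C}[\widetilde E,\widetilde B]^G$ is a ``vertical'' family acting fiberwise, and the product should superimpose the two.

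First I would fix, using compactness of $B$ and $F$ and the bounded geometry of the submersion metric on $\widetilde E$, a Lebesgue number $\varepsilon>0$ for the equivariant cover $\{U_j\}$ of $\widetilde B$, so that any $x,y\in\widetilde B$ with $d(x,y)<\varepsilon$ lie in a common chart $U_j$ over which $\widetilde E$ trivializes as $U_j\times\widetilde F$. Given $T\in\mathbb{C}[\widetilde B]^H$ with Schwartz kernel $k_T(x,y)$ and $\prop(T)<\varepsilon$, and a section $s=\{s_j\}\in\mathbb{C}[\widetilde E,\widetilde B]^G$, I would define an operator $\mu(T,s)$ on $L^2(\widetilde E)$ whose kernel over a pair of nearby fibers $\widetilde F_y,\widetilde F_x$ is $k_T(x,y)$ times the fiber kernel of $s_j(x)$, with $\widetilde F_x,\widetilde F_y$ identified via the trivialization over $U_j$. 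The cocycle condition $\psi_{ij,*}(x)s_i(x)=s_j(x)$ guarantees independence of the chart $j$, and the $\pi_1(B)$-invariance of $\{s_j\}$ together with the $H$-equivariance of $T$ and the $\Gamma$-equivariance of the $s_j(x)$ assemble, along the extension $1\to\Gamma\to G\to H\to1$, into $G$-equivariance of $\mu(T,s)$. Finite base propagation of $T$ and uniformly finite fiber propagation of $s$ yield finite propagation of $\mu(T,s)$ on $\widetilde E$, and local compactness is inherited factorwise, so $\mu(T,s)\in\mathbb{C}[\widetilde E]^G$.

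Next I would pass to the localization algebras. Since such a $T$ need not have uniformly small propagation, I would instead feed in a path $T(t)\in C^*_L(\widetilde B)^H$ with $\prop(T(t))\to0$; once $\prop(T(t))<\varepsilon$ the operator $\mu(T(t),s(t))$ is defined and its propagation on $\widetilde E$ also tends to $0$, so $t\mapsto\mu(T(t),s(t))$ lies in $C^*_L(\widetilde E)^G$. The crucial feature of the localization regime is that the small ambiguities in the construction---the choice of $s_j(x)$ versus $s_j(y)$ over overlapping fibers, and the failure of the horizontal and vertical factors to commute---are all controlled by the base propagation and hence vanish as $t\to\infty$; this makes $\mu$ well defined modulo the relations of $C^*_L(\widetilde E)^G$ and shows it is bilinear and bounded. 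Representing $K$-theory classes by paths of invertibles or differences of projections and applying $\mu$ entrywise then produces $\phi$, whose independence of representatives follows by applying the same construction to homotopies. For $\phi_0$ I would restrict the fiber variable to $C^*_{L,0}(\widetilde E,\widetilde B)^G=\ker(\ev)$ and use that $\ev\circ\mu=\mu\circ(\mathrm{id}\otimes\ev)$, so that the product of a base localization class with a fiber obstruction class evaluates to $0$ at $t=1$ and thus lands in $C^*_{L,0}(\widetilde E)^G$. Specializing to a trivial fibration $\widetilde E=\widetilde B\times\widetilde F$ recovers $\mathbb{C}[\widetilde E,\widetilde B]^G$ as $\widetilde F$-operator-valued functions and identifies $\mu$ with the kernel tensor product, which is exactly \eqref{eq:K-product}.

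I expect the main obstacle to be the globalization step: verifying that the locally defined kernel genuinely glues, across the non-trivial transition functions $\psi_{ij}$, to a single $G$-equivariant operator of finite propagation, and that the resulting class is independent of the cover and of the trivializations. This is where the cocycle condition built into Definition \ref{def:family roe algebra} and the finite-propagation hypothesis---which forces only fibers inside a common chart to interact---must be used in tandem; the accompanying norm estimates needed to extend $\mu$ continuously to the $C^*$-completions should then be routine.
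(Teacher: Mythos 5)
Your proposal takes essentially the same route as the paper's proof: your kernel-level pairing $\mu(T,s)$ (base kernel times fiber kernel over a common trivializing chart, glued via the cocycle condition) is exactly the paper's formula \eqref{eq:product r}, your restriction to the regime $\prop(T(t))<\varepsilon$ corresponds to the paper's reparametrization of $f_t$ so that its propagation stays below the local-triviality radius $r$, and your observation that $\ev\circ\mu=\mu\circ(\mathrm{id}\otimes\ev)$ forces the product into $C^*_{L,0}(\widetilde E)^G$ matches the paper's use of the normalization $g_1=1$. The only difference is bookkeeping: the paper applies the formula directly to $1/10$-projection representatives, forming $(f_t-1)\otimes(g_t-1)+1$ and checking it is a $3/10$-projection, rather than first packaging the construction as a bilinear map on dense subalgebras and then descending to $K$-theory.
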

	
	\begin{proof}
		Without loss of generality, we assume that both $m$ and $n$ are even.
		We will only give in details the construction of
		\[
		\phi_0 : K_0(C^*_L(\widetilde{B})^H)\times K_0(C^*_{L,0}(\widetilde{E},\widetilde{B})^G) \to  K_{0}(C^*_{L,0}(\widetilde{E})^G).
		\]
		Suppose that $f_t\in (C^*_L(\widetilde{B})^H)^+$ represents a $K_0$-class, which has finite propagation that goes to zero as $t$ goes to infinity, and is a $1/10$-projection, that is, $f_t^*=f_t$ and $\|f_t^2-f_t\|<1/10$. Similarly, we suppose that $g_t\in (C^*_{L,0}(\widetilde E,\widetilde B)^G)^+$ is a $1/10$-projection, has finite propagation that goes to zero uniformly as $t$ goes to infinity, and satisfies that $g_1=1$. Furthermore, we assume that $f_t-1$ and $g_t-1$ are given by kernel operators acting on $L^2$-sections as in Definition \ref{def:precompletion}.
		
		Choose $r>0$ small enough such that for any $x\in B$, the restriction of the fiber bundle $E$ to the $r$-ball near $x$ is trivial. Since $f_t$ and $f_{t+M}$ are homotopic for any $M>0$, we may assume that the propagation of $f_t$ is smaller than $r$. By the local triviality, we define
		$$\phi([f_t]\otimes [g_t])=[(f_t-1)\otimes(g_t-1)+1],$$
		where $(f_t-1)\otimes(g_t-1)+1\in (C^*_{L,0}(\widetilde E)^G)^+$ is given by
		 		\begin{equation}\label{eq:product r}
		 		\begin{split}
		 		&\Big( \big((f_t-1)\otimes (g_t-1)\big)h\Big) (x,y) \\
		 		=&\int_{\widetilde B}\int_{\widetilde F} (f_t-1)(x,x')\otimes
		 		(g_t-1)_{x'}(y,y')h(x',y')dy'dx',
		 		\end{split}
		 		\end{equation}		
		 		with $h\in L^2(\widetilde E)$. The above expression makes sense as the propagation of $f_t$ is small enough. It is easy to verify that $(f_t-1)\otimes(g_t-1)+1$ is at most a $3/10$-projection, which gives rise to a $K_0$-class.
		
		Now passing to the matrix algebra and the Grothendieck group, we obtain the product map.
	\end{proof}
	
	\subsection{Family higher invariants}
	In this subsection, we introduce the family version of higher invariants of the signature operator on fibered manifold, and prove Theorem \ref{theo:main 1} and  \ref{theo:main 2}.
	
	On the fibered manifold $E$, the vertical differentials and Poincar\'{e} duality are well-defined as they are compatible with the transition maps. Thus the family $K$-homology class of the vertical signature operator
	$[D_{E,B}^{sgn}]\in K_{\dim F}(C_L^*(\widetilde{E},\widetilde{B})^G)$ is defined similarly as in Definition \ref{def:K-hom sig odd} and \ref{def:K-hom sig even}.
	\begin{theorem}\label{thm: fiber loc}
	We have the following product formula for family $K$-homology class of family signature operator along $F$ holds:
		\[
		k_{B,F}\cdot\phi ([D_B^{sgn}] \otimes [D_{E,B}^{sgn}] )= [D_E^{sgn}],
		\]
where $k_{B,F}=1$ when $\dim B\cdot\dim F$ is even, and $k_{B,F}=2$ otherwise, and $\phi$ is the product map
\[\phi : K_{\dim B}( C_{L}^*(\widetilde{B})^H )  \otimes   K_{\dim F}(C^*_{L}(\widetilde{E},\widetilde{B})^G )\to K_{\dim E} (C^*_L(\widetilde{E})^G ).\]
	\end{theorem}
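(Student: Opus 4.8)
The plan is to reduce the fibered statement to the product statement of Proposition \ref{prop product localized sig} by exploiting the local triviality of the fibration, and then to assemble the resulting local identities into a global one by a Mayer--Vietoris argument along the base. First I would record the key local fact. Over a trivializing chart $V\subseteq B$ the restriction of $E$ is diffeomorphic to $V\times F$, the vertical signature class $[D_{E,B}^{sgn}]$ restricts to the fiber class $[D_F^{sgn}]$ (the vertical differentials and Hodge operators being compatible with the transition maps $\psi_{ij}$), and the product map $\phi$ restricts to the product map $\psi_L$ of \eqref{eq:K-product}. Consequently, Proposition \ref{prop product localized sig} gives, over such a chart,
\[
k_{B,F}\cdot\phi\big([D_B^{sgn}]\otimes[D_{E,B}^{sgn}]\big)\big|_{V}
= k_{B,F}\cdot\psi_L\big([D_B^{sgn}]|_V\otimes[D_F^{sgn}]\big)
= [D_{V\times F}^{sgn}] = [D_E^{sgn}]\big|_{V}.
\]
Here it is essential that the operator homotopies carried out in the proof of Proposition \ref{prop pro for of sig of pro mfld} involve only the fiber operators $D_{\widetilde F},S_{\widetilde F}$ and the base operators tensored together, so that they make sense fiberwise and are compatible with the base transition functions.

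To globalize, I would introduce the auxiliary algebra of operators on $\widetilde E$ that are \emph{localized along the base} $\widetilde B$ --- operators whose propagation in the base direction is finite and tends to $0$ in the localization variable, while remaining family-Roe along the fiber. This algebra receives the product class $\phi([D_B^{sgn}]\otimes[D_{E,B}^{sgn}])$, maps to $C^*_L(\widetilde E)^G$, and carries a Mayer--Vietoris sequence for decompositions $\widetilde B=\widetilde B_1\cup\widetilde B_2$ lifted from a decomposition $B=B_1\cup B_2$ into unions of trivializing charts. Both $[D_E^{sgn}]$ and $k_{B,F}\cdot\phi([D_B^{sgn}]\otimes[D_{E,B}^{sgn}])$ are natural with respect to the cutting and restriction maps of this sequence, since the signature class, the vertical signature class, and the product map are all defined locally in the base direction. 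Inducting on the number of trivializing charts needed to cover $B$ and applying the five lemma at each stage, the local identity above propagates to the global identity
\[
k_{B,F}\cdot\phi\big([D_B^{sgn}]\otimes[D_{E,B}^{sgn}]\big)=[D_E^{sgn}]\in K_{\dim E}(C^*_L(\widetilde E)^G).
\]

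The main obstacle I anticipate is establishing the Mayer--Vietoris sequence for the family localization algebra together with the auxiliary base-localized algebra, and checking that the product map $\phi$ and all three signature classes are compatible with the associated restriction and boundary homomorphisms; this naturality is exactly what allows the purely local computation --- valid on a single trivializing chart, where everything collapses to the honest product of Proposition \ref{prop product localized sig} --- to determine the global class. A secondary technical point, which I would handle simultaneously, is that the pieces $B_i$ and the overlaps $B_1\cap B_2$ are noncompact, so the relevant classes must be the ones for open manifolds built in Definitions \ref{def:K-hom sig odd} and \ref{def:K-hom sig even}, and one must verify that restriction to an open base piece sends $[D_E^{sgn}]$ and $[D_{E,B}^{sgn}]$ to the corresponding classes on the restricted fibration.
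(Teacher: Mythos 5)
Your proposal is correct and follows essentially the same route as the paper: reduce to the product formula of Proposition \ref{prop product localized sig} over trivializing charts, lift both sides to an auxiliary algebra of operators localized along the base (the paper's $C^*_{\widetilde B,L,0}(\widetilde E)^G$, with the lifts provided by Lemma \ref{lemma:factorthu} and the base-metric rescaling of Lemma \ref{lemma:localized higher rho}), and globalize by a Mayer--Vietoris induction over a cover of $B$ by trivializing pieces (the paper inducts over the skeleta of a triangulation, Proposition \ref{prop:MV}). The only cosmetic difference is your induction on the number of charts versus the paper's skeletal induction, which buys nothing materially different.
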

	
	We shall also define the family higher rho invariant of a fiberwise homotopy equivalence. Suppose we have two fibrations over the same base
		$$\xymatrix{F' \ar[r]^{} &E'\ar[r]^{\pi'} &B}\text{ and }\xymatrix{F \ar[r]^{} &E\ar[r]^{\pi} &B}.$$
	Let $f:E'\to E$ be a fiberwise homotopy equivalence, that is, the following diagram commutes
	$$\xymatrix{E'\ar[rr]^{f}\ar[dr]_{\pi'}&& E\ar[dl]^{\pi} \\ &B&}$$
	as well as replacing $f$ with its homotopy inverse and the corresponding homotopy.
	 Using the vertical differential and the Poincar\'e duality operator, we define a family higher rho invariant $\rho(f;B)\in K_{\dim F}(C_{L,0}^*(\widetilde{E},\widetilde{B})^G)$ as in Definition \ref{def:higher rho odd}.

	\begin{theorem}\label{thm: fiber rho}
			With the same notation as above,
			we have the following product formula for family higher rho invariant associated to fiberwise homotopy equivalence holds:
			\[
			k_{B,F}\phi ([D_B^{sgn}] \otimes   \rho(f;B) )= \rho(f),
			\]
where $k_{B,F}=1$ when $\dim B\cdot\dim F$ is even, and $k_{B,F}=2$ otherwise, and $\phi_0$ is the product map
\[\phi_0 : K_{\dim B}(C_{L}^*(\widetilde{B})^H_r  ) \otimes   K_{\dim F}( C^*_{L,0}(\widetilde{E},\widetilde{B})^G) \to K_{\dim E}( C^*_{L,0}(\widetilde{E})^G ).\]
	\end{theorem}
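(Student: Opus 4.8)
The plan is to reduce Theorem \ref{thm: fiber rho} to the already-established product formula for the $K$-homology class (Theorem \ref{thm: fiber loc}) by exploiting the fact that the higher rho invariant lives in the obstruction ideal $C^*_{L,0}$, which sits inside the localization algebra $C^*_L$. The key structural observation is that the constructions of $[D^{sgn}]$ (Definition \ref{def:K-hom sig odd}, \ref{def:K-hom sig even}) and of $\rho(f)$ (Definition \ref{def:higher rho odd}, \ref{def:higher rho even}) are built from the same data: the operators $D$ and $S$, together with the path $S_f(t)$ connecting $S$ to $-S$. In the family setting, the vertical signature operator $D_{E,B}^{sgn}$ and the vertical Poincar\'e duality $S$ are compatible with the transition maps, so the entire odd-case construction of $\rho(f;B)$ via $S_f(t)$ in \eqref{eq:Sf(t)} goes through fiberwise. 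First I would set up the fiberwise version of $S_f(t)$, check that $D \pm S_f(t)$ remain invertible in the family Roe algebra $C^*(\widetilde E,\widetilde B)^G$ uniformly over $\widetilde B$, and confirm that the resulting invertible element lands in the family obstruction algebra.

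The heart of the argument is to show that the product map $\phi_0$ intertwines the base signature class $[D_B^{sgn}]$ with the two ingredients of $\rho(f;B)$. Concretely, I would take the even-even case as the model (as in Theorem \ref{thm: product maps}) and argue that
\[
\phi_0\big([D_B^{sgn}] \otimes \rho(f;B)\big) = \big[\,\phi_0([D_B^{sgn}] \otimes \Theta_{f,+}) - \phi_0([D_B^{sgn}] \otimes \Theta_{f,-})\,\big],
\]
using the explicit formula \eqref{eq:product r} for the product $(f_t - 1)\otimes(g_t - 1) + 1$. The point is that the product map, being defined by a tensor product of kernels localized along the base, commutes with the formal difference of projections and with the homotopies in the $t$-parameter. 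I would then appeal to the exact same local-computation (even-times-even, odd-times-even, etc.) carried out in the proof of Proposition \ref{prop pro for of sig of pro mfld}, but now performed fiberwise: the base factor $D_{\widetilde B}, S_{\widetilde B}$ tensors against the vertical factor exactly as $D_{\widetilde M}, S_{\widetilde M}$ tensored against $D_{\widetilde N}, S_{\widetilde N}$. Since Theorem \ref{thm: fiber loc} already establishes the analogous identity at the level of the $K$-homology class, the factor $k_{B,F}$ appears for the same parity reasons, and the rho-invariant identity follows by restricting the $C^*_L$-level equality to the $C^*_{L,0}$-ideal.

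The main obstacle I expect is the uniformity over the base $\widetilde B$ together with the noncompactness of $\widetilde B$: the product formula of Proposition \ref{prop pro for of sig of pro mfld} is a statement about a genuine product $\widetilde M \times \widetilde N$ with a single fiber, whereas here the fiber $\widetilde F$ varies over $\widetilde B$ through the transition maps $\psi_{ij,*}$. One must verify that all the homotopies $W_{\pm,s}$, the spectral projections $P^{\pm}(B_{\widetilde M}^{\pm})$, and the functional calculus expressions $g(D_{\widetilde N}), f(D_{\widetilde N})$ assemble into honest sections of the family Roe bundle of Definition \ref{def:family roe algebra}, i.e. that they are compatible with conjugation by $\psi_{ij,*}$ and have propagation controlled uniformly on $\widetilde B$. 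The cleanest way to handle this is the Mayer--Vietoris strategy flagged in the introduction: cover $\widetilde B$ by the trivializing opens $\{U_j\}$, where $C^*(\widetilde E,\widetilde B)^G$ restricts to the honest product situation and Proposition \ref{prop pro for of sig of pro mfld} applies verbatim, and then glue via the cocycle condition. I would carry this out first for $\phi$ and $[D_E^{sgn}]$ (Theorem \ref{thm: fiber loc}), and then observe that the entire argument is compatible with the evaluation map $\mathrm{ev}$, so that restricting to the kernel $C^*_{L,0}$ yields the rho-invariant statement with the identical constant $k_{B,F}$.
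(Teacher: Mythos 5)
Your proposal correctly spots the two structural ingredients the paper also uses: over a trivializing chart the statement reduces to the product computation of Proposition \ref{prop pro for of sig of pro mfld}, and the general case must be assembled by a Mayer--Vietoris argument over the base. However, there are two genuine gaps, and they sit exactly where the paper's real work lies. The first is your headline reduction: proving Theorem \ref{thm: fiber loc} in $K_*(C^*_L(\widetilde E)^G)$ and then obtaining Theorem \ref{thm: fiber rho} ``by restricting the $C^*_L$-level equality to the $C^*_{L,0}$-ideal'' is a non sequitur. The inclusion $C^*_{L,0}(\widetilde E)^G\hookrightarrow C^*_{L}(\widetilde E)^G$ induces a $K$-theory map that is in general not injective (its failure of injectivity is governed by $K_*(C^*(\widetilde E)^G)$ through the long exact sequence of the ideal), so knowing that $\phi_0([D_B^{sgn}]\otimes\rho(f;B))$ and $\rho(f)$ have equal images in $K_*(C^*_L)$ says nothing about their equality as secondary classes in $K_*(C^*_{L,0})$. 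This is precisely why the paper proves the rho formula directly and remarks that Theorem \ref{thm: fiber loc} follows by a similar (not a prior) argument. Your fallback phrasing, that ``the entire argument is compatible with $\ev$,'' names the property one would need but does not establish it; making every homotopy in the product computation respect evaluation at $t=1$ is the content to be proved, not an observation.

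The second gap is that your Mayer--Vietoris step is not well posed in the algebra where the classes actually live. A representative of $\rho(f)\in K_*(C^*_{L,0}(\widetilde E)^G)$ has, at finite time $t$, finite but not small propagation in the base direction, so ``restricting it to a trivializing open $U_j$'' is not a defined operation, and ``gluing via the cocycle condition'' is not a $K$-theoretic move. The paper resolves this with a device absent from your proposal: the horizontally localized algebra $C^*_{\widetilde B,L,0}(\widetilde E)^G$ of Definition \ref{def:localization at base}, whose elements have propagation along $\widetilde B$ tending to zero, so that cutting by characteristic functions of base sets is legitimate and the six-term sequence of Proposition \ref{prop:MV} holds for covers of $B$. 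Two lifting results are then indispensable: the product map lifts to $\phi_{L,0}$ with values in this algebra (Lemma \ref{lemma:factorthu}), and $\rho(f)$ lifts to a class $\rho_L(f)$ with $\ev_*(\rho_L(f))=\rho(f)$ (Lemma \ref{lemma:localized higher rho}); the latter requires the genuinely geometric construction of rescaling only the base direction of the metric, $g^E+(n+r)\pi^*g^B$, which shrinks horizontal propagation while leaving the fiberwise signature data intact. The refined identity $\phi_{L,0}([D_B^{sgn}]\otimes\rho(f;B))=\rho_L(f)$ is then proved by induction over the skeleta of a triangulation of $B$, gluing a neighborhood of the $k$-skeleton to the disjoint union of $(k+1)$-simplices at each stage, and only afterwards does one apply $\ev_*$ to obtain the theorem. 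Without the auxiliary algebra and these two lifts, your induction has no algebra in which to run.
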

	
	In the following, we only prove Theorem \ref{thm: fiber rho} in details. The proof for Theorem \ref{thm: fiber loc} is similar.
	
	We need some definitions to prepare for the proof of Theorem \ref{thm: fiber rho}.
	\begin{definition}\label{def:prop along base}
		For any element $T\in C^*(\widetilde{E})^G$, we define the propagation of $T$ along the base space $\widetilde B$ by
		$$\prop_{\widetilde B}(T)=\sup\{d(\widetilde\pi(x),\widetilde\pi(y)) : (x,y)\in \text{Supp}(T) \},$$
		where $\widetilde\pi$ is the lift of the fiber projection $\pi:E\to B$.
	\end{definition}
	We need the following $C^*$-algebra generated by elements in $C^*_{L,0}(\widetilde E)^G$ that can be localized horizontally. This is a generalization of the equivariant localization algebra defined in Definition \ref{def roe and localization}.
	\begin{definition}\label{def:localization at base}
		Define $C^*_{\widetilde B,L,0}(\widetilde E)^G$ to be the $C^*$-algebra generated by paths $f:[1,+\infty)\to C^*_{L,0}(\widetilde E)^G$ such that $f(s)$ is uniformly norm-continuous and uniformly norm-bounded, and its propagation along $\widetilde B$ is finite and goes to zero uniformly as $s$ goes to $\infty$. The norm of $f\in C^*_{\widetilde B,L,0}(\widetilde E)^G$ is given by the supreme of its norm in $C^*_{L,0}(\widetilde E)^G$, that is,
		$\|f\|=\sup_{s\geqslant 1}\|f(s)\|.$
	\end{definition}
	There is an evaluation map
	$$\ev:C^*_{\widetilde B,L,0}(\widetilde E)^G\to C^*_{L,0}(\widetilde E)^G,$$
	which induces a $K$-theoretical map denoted by $\ev_*$.
	
	If $X$ is a closed Riemannian manifold, the equivariant localization algebra $C^*_L(\widetilde X)^{\pi_1X}$ admits a Mayer-Vietoris sequence for a partition of $X$.
	More precisely, if $U_1,U_2$ are two open sets on $X$ and $\widetilde{U_1},\widetilde{U_2}$ are their lifts to $\widetilde X$, then we have the following six-term exact sequence
	(cf: \cite[Proposition 3.11]{Yulocalization}). 
	$$\xymatrix{K_0(C^*_L(\widetilde{U_1}\bigcap\widetilde{U_2})^{\pi_1X})\ar[r]&
		{\begin{matrix}
		K_0(C^*_L(\widetilde{U_1})^{\pi_1X}) \\
		\oplus \\
		K_0(C^*_L(\widetilde{U_2})^{\pi_1X})
		\end{matrix}}
		\ar[r]& K_0(C^*_L(\widetilde{U_1}\bigcup\widetilde{U_2})^{\pi_1X})\ar[dd]\\ &&\\
		K_1(C^*_L(\widetilde{U_1}\bigcup\widetilde{U_2})^{\pi_1X})\ar[uu]&
	{\begin{matrix}
			K_1(C^*_L(\widetilde{U_1})^{\pi_1X})\\ \oplus \\K_1(C^*_L(\widetilde{U_2})^{\pi_1X})
		\end{matrix}}
		\ar[l]&K_1(C^*_L(\widetilde{U_1}\bigcap\widetilde{U_2})^{\pi_1X})\ar[l]
	}$$
	As the $C^*$-algebra $C^*_{\widetilde B,L,0}(\widetilde E)^G$ is generated by elements that can be localized along $B$, it also admits a Mayer-Vietoris sequence as above for two open sets on the base space.
	\begin{proposition}\label{prop:MV}
		Let $U_1,U_2$ be two open sets on $B$ and $\widetilde{U_1},\widetilde{U_2}$ their lifts to $\widetilde B$. Let $\widetilde E_{\widetilde{U_1}}$ and $\widetilde E_{\widetilde{U_2}}$ be the restriction of $\widetilde E$ to $\widetilde{U_1}$ and $\widetilde{U_2}$ respectively. We have the following six-term exact sequence:
			$$\xymatrix{K_0(C^*_{\widetilde B,L,0}(\widetilde E_{\widetilde{U_1}}\bigcap\widetilde E_{\widetilde{U_2}})^G)\ar[r]&
				{\begin{matrix}
					K_0(C^*_{\widetilde B,L,0}(\widetilde E_{\widetilde{U_1}})^G)\\
					\oplus \\
					K_0(C^*_{\widetilde B,L,0}(\widetilde E_{\widetilde{U_2}})^G)
					\end{matrix}}
				\ar[r]& K_0(C^*_{\widetilde B,L,0}(\widetilde E_{\widetilde{U_1}}\bigcup\widetilde E_{\widetilde{U_2}})^G)\ar[dd]\\ &&\\
			K_1(C^*_{\widetilde B,L,0}(\widetilde E_{\widetilde{U_1}}\bigcup\widetilde E_{\widetilde{U_2}})^G)\ar[uu]&
			{\begin{matrix}
				K_1(C^*_{\widetilde B,L,0}(\widetilde E_{\widetilde{U_1}})^G)\\
				\oplus \\
				K_1(C^*_{\widetilde B,L,0}(\widetilde E_{\widetilde{U_2}})^G)
				\end{matrix}}
			\ar[l]&K_1(C^*_{\widetilde B,L,0}(\widetilde E_{\widetilde{U_1}}\bigcap\widetilde E_{\widetilde{U_2}})^G)\ar[l]
		}.$$
	\end{proposition}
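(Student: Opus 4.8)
The plan is to adapt Yu's proof of the Mayer--Vietoris sequence for the equivariant localization algebra (cf:\ \cite[Proposition 3.11]{Yulocalization}), exploiting the observation that, \emph{with respect to the base} $\widetilde B$, the algebra $C^*_{\widetilde B,L,0}(\widetilde E)^G$ behaves exactly like a localization algebra: by Definition \ref{def:localization at base} its elements are paths $f(s)$ whose propagation along $\widetilde B$ is finite and tends to zero uniformly as $s\to\infty$. Consequently every cutting argument that is normally carried out in the whole space can here be carried out purely in the base direction, while the fiberwise information (recorded by the coefficients lying in $C^*_{L,0}(\widetilde E)^G$) is simply transported along and never interferes. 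For an open set $U\subseteq B$ I abbreviate $A(U)=C^*_{\widetilde B,L,0}(\widetilde E_{\widetilde U})^G$.

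First I would establish the short exact sequence of $C^*$-algebras
\[
0 \longrightarrow A(U_1\cap U_2) \longrightarrow A(U_1)\oplus A(U_2) \longrightarrow A(U_1)+A(U_2) \longrightarrow 0,
\]
where the first map is $x\mapsto (x,x)$, the second is $(x,y)\mapsto x-y$, and $A(U_1)+A(U_2)$ denotes the subalgebra of $A(U_1\cup U_2)$ generated by the two pieces. Exactness in the middle and surjectivity on the right are formal; the essential content is the identification of the kernel,
\[
A(U_1)\cap A(U_2) = A(U_1\cap U_2).
\]
The inclusion $\supseteq$ is immediate. For $\subseteq$ one uses that an element lying in both $A(U_1)$ and $A(U_2)$ has, for each $s$, support within $\prop_{\widetilde B}$-distance of both $\widetilde{U_1}$ and $\widetilde{U_2}$; since $\prop_{\widetilde B}(f(s))\to 0$, these supports are eventually confined to an arbitrarily small neighborhood of $\widetilde{U_1}\cap\widetilde{U_2}$, which places the element in $A(U_1\cap U_2)$ after a mild shrinking of the open sets.

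Next I would show that the inclusion $A(U_1)+A(U_2)\hookrightarrow A(U_1\cup U_2)$ induces an isomorphism on $K$-theory; this is the heart of the argument. Choose a partition of unity $\chi_1+\chi_2=1$ on $B$ subordinate to $\{U_1,U_2\}$ and pull it back to $\widetilde E$ via $\widetilde\pi$. Given $f\in A(U_1\cup U_2)$, decompose $f=\chi_1 f\chi_1+\chi_1 f\chi_2+\chi_2 f\chi_1+\chi_2 f\chi_2$. The diagonal pieces $\chi_i f\chi_i$ are supported over $U_i\times U_i$ and hence lie in $A(U_i)$, while the off-diagonal pieces $\chi_i f\chi_j$ require $d(\widetilde\pi x,\widetilde\pi y)$ small once $\prop_{\widetilde B}(f(s))$ is small, which forces their support into a neighborhood of $\widetilde{U_1}\cap\widetilde{U_2}$ and hence into $A(U_1\cap U_2)\subseteq A(U_1)+A(U_2)$. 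Feeding the short exact sequence above into the six-term exact sequence in $K$-theory and using this isomorphism to replace $A(U_1)+A(U_2)$ by $A(U_1\cup U_2)$ then yields the claimed Mayer--Vietoris sequence.

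The main obstacle I expect is precisely this cutting step, and in particular making rigorous the phrase ``once the propagation is small.'' For a path $f(\cdot)\in A(U_1\cup U_2)$ the propagation along $\widetilde B$ is small only for large $s$, so the decomposition above genuinely lands in $A(U_1)+A(U_2)$ only on a terminal interval of the parameter, and the cutoffs $\chi_i$ need not have support compactly contained in $U_i$. The standard remedy, which I would import from \cite{Yulocalization}, is to work with slightly shrunken open sets $U_i'\Subset U_i$, to use that on each fixed compact region $\prop_{\widetilde B}(f(s))$ is eventually smaller than the gap between $\partial U_i'$ and $\partial U_i$ so that the cut pieces genuinely belong to $A(U_i)$ for $s$ large, and then to absorb the discrepancy on the initial interval by a homotopy. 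Throughout, the $G$-equivariance and $\pi_1(B)$-invariance are automatically preserved because all cutoff functions are pulled back from the base $B$.
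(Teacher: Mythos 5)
Your cutting intuition (propagation along $\widetilde B$ tends to zero, so one can cut with partitions of unity pulled back from the base) is indeed the engine of the paper's argument, but the structural framework you build around it has a genuine gap. The restriction algebras $A(U_i)=C^*_{\widetilde B,L,0}(\widetilde E_{\widetilde{U_i}})^G$, embedded into $A(U_1\cup U_2)$ by extension by zero, are \emph{not} ideals: if $T$ is supported in $\widetilde E_{\widetilde{U_1}}\times\widetilde E_{\widetilde{U_1}}$ and $S\in A(U_1\cup U_2)$ has small but nonzero propagation along $\widetilde B$, then $TS$ is supported only \emph{near} $\widetilde E_{\widetilde{U_1}}\times\widetilde E_{\widetilde{U_1}}$, not in it. This defeats the proposal at two points. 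First, your displayed sequence is not a short exact sequence of $C^*$-algebras: the map $(x,y)\mapsto x-y$ is not a $*$-homomorphism, and the linear sum of two non-ideal $C^*$-subalgebras need not even be closed (for ideals both problems disappear). Second, the six-term Mayer--Vietoris machinery you invoke at the end --- in the form used by the paper, \cite[Lemma 3.1]{HRYMV} --- is a statement about a pair of closed two-sided ideals $I_1,I_2$ of an algebra with $I_1+I_2$ equal to that algebra; its proof rests on the isomorphism $I_1/(I_1\cap I_2)\cong (I_1+I_2)/I_2$, which is meaningless for mere subalgebras. Your proposed remedy (shrinking to $U_i'\subset U_i$ and absorbing an initial parameter interval by a homotopy) repairs the support estimates in the cutting step, but it cannot repair this structural defect: no amount of shrinking makes $A(U_i)$ an ideal.

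The paper's proof resolves exactly this point, and it is the one idea your proposal is missing. For each open $Y\subseteq B$ it introduces the subalgebra $(C^*_{\widetilde B,L,0}(\widetilde E)^G)_Y$ of the \emph{ambient} algebra $C^*_{\widetilde B,L,0}(\widetilde E)^G$ generated by paths whose supports lie, for large localization parameter, in arbitrarily small neighborhoods of $\widetilde E_{\widetilde Y}\times\widetilde E_{\widetilde Y}$. Because this condition is asymptotic and stable under enlarging supports by a vanishing amount, these subalgebras \emph{are} closed two-sided ideals, and your partition-of-unity decomposition is precisely what proves the ideal identity $(C^*_{\widetilde B,L,0}(\widetilde E)^G)_{U_1}+(C^*_{\widetilde B,L,0}(\widetilde E)^G)_{U_2}=(C^*_{\widetilde B,L,0}(\widetilde E)^G)_{U_1\cup U_2}$. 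The price is an extra step absent from your proposal: one must show that the natural inclusion $C^*_{\widetilde B,L,0}(\widetilde E_{\widetilde Y})^G\hookrightarrow (C^*_{\widetilde B,L,0}(\widetilde E)^G)_Y$ induces an isomorphism on $K$-theory, which the paper gets from the reparametrization homotopy $f(s,t)\mapsto f(s+s_0,t)$ together with an analogue of \cite[Proposition 3.7]{Yulocalization} comparing $Y$ with its small neighborhoods $Y_\delta$. With these two ingredients, \cite[Lemma 3.1]{HRYMV} applied to the two ideals yields Proposition \ref{prop:MV}. In short: your proof becomes correct once every $A(U)$ is replaced by its ideal avatar $(C^*_{\widetilde B,L,0}(\widetilde E)^G)_U$ and the $K$-theoretic comparison between the two is supplied; as written, the exact sequence you feed into $K$-theory does not exist.
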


\begin{proof}
We sketch the proof of Proposition \ref{prop:MV} as follows, which is essentially the same as the proof of Proposition 3.11 in \cite{Yulocalization}.
	For any open subset $Y$ of $B$, we define $(C^*_{\widetilde B,L,0}(\widetilde E)^G)_Y$ to be the $C^*$-subalgebra of $C^*_{\widetilde B,L,0}(\widetilde E)^G$ generated by all paths $f:[1,\infty)\to C^*_{L,0}(\widetilde E)^G$ such that for any $s,t\in[1,\infty)$, $\prop(f(s,t))<\infty$ as an operator in $\mathbb{C}[\widetilde E]^G$ and for any $\varepsilon>0$, there exists $S>0$ for any $s>S$ and $t\in[1,\infty)$,
	$\supp(f(s,t))$ lies in the $\varepsilon$-neighborhood of $\widetilde E_{\widetilde Y}\times \widetilde E_{\widetilde Y}$, where $\widetilde E_{\widetilde Y}$ is the restriction of $\widetilde E$ to $Y$.
	
	There is a natural inclusion $i:C^*_{\widetilde B,L,0}(\widetilde E_{\widetilde Y})^G\to (C^*_{\widetilde B,L,0}(\widetilde E)^G)_Y$.
	For any $f\in (C^*_{\widetilde B,L,0}(\widetilde E)^G)_Y$, we have a natural homotopy between $f$ and $f_{s_0}(s,t)=f(s+s_0,t)$, whose support is closed to $\widetilde E_{\widetilde Y}\times \widetilde E_{\widetilde Y}$. This shows that for any $\delta>0$, any $K$-theory element of $(C^*_{\widetilde B,L,0}(\widetilde E)^G)_Y$ admits a representative whose support lies in
	$\widetilde E_{\widetilde Y_\delta}\times \widetilde E_{\widetilde Y_\delta}$, where $Y_\delta$ is the $\delta$-neighborhood of $Y$. As an analogue of \cite[Proposition 3.7]{Yulocalization}, we see that $C^*_{\widetilde B,L,0}(\widetilde E_{\widetilde Y})^G$ and $C^*_{\widetilde B,L,0}(\widetilde E_{\widetilde Y_\delta})^G$ are isomorphic on $K$-theoretical level for small $\delta$. This shows that the $K$-theoretical map $i_*$ is surjective. The injectivity of $i_*$ goes similarly.
	
	Note that $(C^*_{\widetilde B,L,0}(\widetilde E)^G)_{U_1}$ and $(C^*_{\widetilde B,L,0}(\widetilde E)^G)_{U_2}$ are closed ideals of $C^*_{\widetilde B,L,0}(\widetilde E)^G$. And we also have that
	$$(C^*_{\widetilde B,L,0}(\widetilde E)^G)_{U_1}+(C^*_{\widetilde B,L,0}(\widetilde E)^G)_{U_2}=(C^*_{\widetilde B,L,0}(\widetilde E)^G)_{U_1}.$$
	Now the proposition follows from the $K$-theoretical six-term exact sequence (cf: \cite[Lemma 3.1]{HRYMV}).
\end{proof}

In the following Lemma, we show that there exists a natural map
\[
\phi_{L,0}:K_m(C^*_L(\widetilde{B})^H)\otimes K_n(C^*_{L,0}(\widetilde{E},\widetilde{B})^G) \longrightarrow  K_{m+n}(C^*_{\widetilde B,L,0}(\widetilde{E})^G)
\]
compatible with $\phi_0$ defined in Theorem \ref{thm: product maps}.

	\begin{lemma}\label{lemma:factorthu}
		The product map $\phi_0$ defined in Theorem \ref{thm: product maps} factors through the evaluation map
 \[\ev_*: K_{*}(C^*_{\widetilde B,L,0}(\widetilde E)^G) \to  K_{*}(C^*_{L,0}(\widetilde E)^G).\] That is, for any $m,n\in\{0,1\}$ there exists a map
		$$\phi_{L,0}:K_m(C^*_L(\widetilde{B})^H)\otimes K_n(C^*_{L,0}(\widetilde{E},\widetilde{B})^G) \longrightarrow  K_{m+n}(C^*_{\widetilde B,L,0}(\widetilde{E})^G)$$
		such that the following diagram commutes
		$$\xymatrix{K_m(C^*_L(\widetilde{B})^H)\otimes K_n(C^*_{L,0}(\widetilde{E},\widetilde{B})^G)
			\ar[rr]^-{\phi_{L,0}}\ar[ddrr]^{\phi_0} &&K_{m+n}(C^*_{\widetilde B,L,0}(\widetilde{E})^G) \ar[dd]^{\ev_*}\\ &&\\
			&&K_{m+n}(C^*_{L,0}(\widetilde{E})^G)
		}$$
	\end{lemma}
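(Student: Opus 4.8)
The plan is to build $\phi_{L,0}$ directly at the level of representatives, decoupling the base localization parameter from the fibre one, and then to verify that evaluating the base parameter at $1$ recovers $\phi_0$. Following the proof of Theorem \ref{thm: product maps}, I will treat the representative case $m=n=0$ in detail; the remaining parities are handled by the identical construction with invertibles in place of projections (equivalently, by suspension), so I will only indicate them. Let $[f_t]\in K_0(C^*_L(\widetilde B)^H)$ and $[g_t]\in K_0(C^*_{L,0}(\widetilde E,\widetilde B)^G)$ be represented exactly as in Theorem \ref{thm: product maps}: $f_t\in(C^*_L(\widetilde B)^H)^+$ a $1/10$-projection with $\prop(f_t)\to 0$, and $g_t\in(C^*_{L,0}(\widetilde E,\widetilde B)^G)^+$ a $1/10$-projection whose propagation along the fibre tends to $0$ uniformly and with $g_1=1$. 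After the shift $t\mapsto t+M$, which leaves the $K$-theory class unchanged, I may assume $\prop(f_t)<r$ for all $t\geq 1$, where $r$ is the trivialization radius fixed in Theorem \ref{thm: product maps}, so that the fibrewise product \eqref{eq:product r} is defined for every value of the parameters below.

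Next I will introduce a second parameter $s\in[1,\infty)$ and set
\[
\Phi(s)(t)=\big(f_{\max(s,t)}-1\big)\otimes\big(g_t-1\big)+1,
\]
with $\otimes$ the fibrewise product of \eqref{eq:product r}, and define $\phi_{L,0}([f_t]\otimes[g_t])=[\Phi]$. The first thing to verify is that for each fixed $s$ the path $t\mapsto\Phi(s)(t)$ lies in $(C^*_{L,0}(\widetilde E)^G)^+$: at $t=1$ we have $g_1-1=0$, hence $\Phi(s)(1)=1$; and as $t\to\infty$ eventually $\max(s,t)=t$, so the full propagation on $\widetilde E$ is bounded by $\prop(f_t)+\prop(g_t)\to 0$. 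The second and decisive point is that $\Phi$ actually lands in $C^*_{\widetilde B,L,0}(\widetilde E)^G$: the propagation along $\widetilde B$ of $\Phi(s)$ is controlled by $\prop(f_{\max(s,t)})$, and since $\max(s,t)\geq s$ this is at most $\sup_{u\geq s}\prop(f_u)$, uniformly in $t$, which tends to $0$ as $s\to\infty$ because $\prop(f_u)\to 0$. This is exactly the defining condition of Definition \ref{def:localization at base}. Joint norm-continuity and uniform boundedness in $(s,t)$ follow from those of $f$ and $g$ together with continuity of $\max$, and the $3/10$-projection estimate of Theorem \ref{thm: product maps} shows $\Phi(s)(t)$ is an approximate projection, so $[\Phi]$ is a well-defined $K_0$-class.

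Finally I will check that the triangle commutes, i.e. $\ev_*[\Phi]=\phi_0([f_t]\otimes[g_t])$. Since $\ev$ is evaluation of the base parameter at $s=1$ and $t\geq 1$ forces $\max(1,t)=t$, one gets
\[
\Phi(1)(t)=\big(f_t-1\big)\otimes\big(g_t-1\big)+1,
\]
which is precisely the representative of $\phi_0([f_t]\otimes[g_t])$ produced in the proof of Theorem \ref{thm: product maps}. Independence of the chosen representatives and bilinearity then follow by running the same construction on homotopies and direct sums and passing to the Grothendieck group, and the odd parities are obtained by the analogous unitary/suspension version. The one genuinely delicate point is the simultaneous control of the two propagations — vanishing of the \emph{full} propagation in $t$ for each fixed $s$ versus \emph{uniform} vanishing of the base propagation as $s\to\infty$ — and this is exactly what the reparametrization by $\max(s,t)$ is designed to reconcile; once the representatives are normalized so that $\prop(f_t)<r$ throughout, the rest is routine.
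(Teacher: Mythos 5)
Your proposal is correct and essentially identical to the paper's proof: the paper also introduces a second parameter $s$ and reparametrizes the base factor (using $f_{t+s-1}$ where you use $f_{\max(s,t)}$) so that the propagation along $\widetilde B$ vanishes uniformly as $s\to\infty$ while evaluation at $s=1$ recovers the representative of $\phi_0$. The choice of reparametrization is immaterial, so your argument matches the paper's in substance.
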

	\begin{proof}
		Without loss of generality, we assume that both $m$ and $n$ are zero. With the same notations as in the proof of Theorem \ref{thm: product maps}, we define $\phi_{L,0}$ by
			$$\phi([f_t]\otimes [g_t])=[(f_{t+s-1}-1)\otimes(g_t-1)+1],$$
		where $(f_{t+s-1}-1)\otimes(g_t-1)+1\in (C^*_{L,0}(\widetilde E)^G)^+$ is given by
		\begin{equation}\label{eq:product loc}
		\begin{split}
		&\Big( \big((f_{t+s-1}-1)\otimes (g_t-1)\big)h\Big) (x,y) \\
		=&\int_{\widetilde B}\int_{\widetilde F} \big(f_{t+s-1}-1\big)(x,x')\otimes
		\big(g_t-1\big)_{x'}(y,y')h(x',y')dy'dx',
		\end{split}
		\end{equation}		
		with $h\in L^2(\widetilde E)$. Here $t\in[1,+\infty)$ is the parameter in $C^*_{L,0}(\widetilde{E})^G$ and $s\in[1,+\infty)$ is the extra parameter induced in
		$C^*_{\widetilde B,L,0}(\widetilde{E})^G$. The expression makes sense as we may assume that the propagation of $f_t$ is small enough. After passing to the matrix algebra and the Grothendieck group, we obtain the map $\phi_{L,0}$. The commuting diagram follows directly from the definition.
	\end{proof}
	\begin{lemma}\label{lemma:localized higher rho}
		With the same notations, for the fiberwise homotopy equivalence $f$, there exists a $K$-theory class $\rho_L(f)\in K_{\dim E}(C^*_{\widetilde B,L,0}(\widetilde{E})^G)$ such that $\ev_*(\rho_L(f))=\rho(f)\in K_{\dim E}(C^*_{L,0}(\widetilde{E})^G)$.
	\end{lemma}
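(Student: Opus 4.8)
The plan is to realize the higher rho invariant $\rho(f)$ by a representative whose propagation along the base can be made arbitrarily small, thereby producing the extra localization parameter $s$ required by Definition \ref{def:localization at base}. The mechanism I would use is to carry out the entire construction of Definition \ref{def:higher rho odd} (resp.\ Definition \ref{def:higher rho even}) after rescaling the base metric, which amounts to an adiabatic stretching of the fibration in the horizontal directions.

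Concretely, for $s\in[1,\infty)$ let $g_s$ denote the Riemannian metric on $E$ obtained from the original metric by scaling the horizontal (base) directions by the factor $s$ while leaving the vertical (fiber) metric unchanged, and lift $g_s$ to $\widetilde E$. Because $f\colon E'\to E$ commutes with the projections to $B$, it maps the fiber over each point of $\widetilde B$ to the fiber over the same point; hence $f$ remains a controlled homotopy equivalence with respect to every $g_s$, with control constant in Definition \ref{def controlled homotopy equivalencce} bounded uniformly in $s$ (the fiber geometry, and the Hilsum--Skandalis operator $\mathcal T_f$ built from it, do not change with $s$). First I would run the construction of Definition \ref{def:higher rho odd} verbatim using $g_s$ in place of the original metric, obtaining an invertible element $\rho(f)^{(s)}\in (C^*_{L,0}(\widetilde E)^G)^+$ with $\rho(f)^{(1)}=\rho(f)$. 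For each fixed $s$ this lies in $C^*_{L,0}(\widetilde E)^G$: the localization parameter $t$ still drives the full propagation to zero as $t\to\infty$, exactly as in Definition \ref{def:K-hom sig odd}, and the value at $t=1$ is the identity.

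Next I would estimate the propagation along $\widetilde B$. Since all operators entering $S_f(t)$ and the resolvents $(D\pm S_f(t))^{-1}$ are assembled from the vertical operators $D_{\widetilde F}$, $S_{\widetilde F}$, the fiberwise operator $\mathcal T_f$, and the horizontal part of $D_{\widetilde E}$, and since in the metric $g_s$ all horizontal distances are multiplied by $s$, the support of $\rho(f)^{(s)}(t)$, measured in the original metric, satisfies $\prop_{\widetilde B}(\rho(f)^{(s)}(t))\le C/s$ for a constant $C$ independent of $t$. This is exactly where the fiberwise nature of $f$ is essential: $\mathcal T_f$ contributes no horizontal propagation, and the only horizontal spreading comes from $D_{\widetilde E}$, whose horizontal propagation is suppressed by $1/s$. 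Together with the uniform norm-continuity and boundedness of $s\mapsto \rho(f)^{(s)}$, which follow from continuous and bounded dependence of the resolvents and spectral projections on $s$, the family $\{\rho(f)^{(s)}\}_{s\ge 1}$ is an invertible element of $(C^*_{\widetilde B,L,0}(\widetilde E)^G)^+$; I set $\rho_L(f)$ to be the class it represents in $K_{\dim E}(C^*_{\widetilde B,L,0}(\widetilde E)^G)$. Evaluation at $s=1$ then gives $\ev_*(\rho_L(f))=[\rho(f)^{(1)}]=\rho(f)$, as required. The even-dimensional case is identical, working with the difference of projections $\Theta_{f,+}(t)-\Theta_{f,-}(t)$ of Definition \ref{def:higher rho even}.

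The main obstacle I anticipate is the uniform-in-$t$ estimate $\prop_{\widetilde B}(\rho(f)^{(s)}(t))\le C/s$: one must check that the resolvents and spectral projections appearing in the rho path, which are genuinely nonlocal, nevertheless admit finite-propagation approximations whose horizontal propagation is controlled uniformly in both $t$ and $s$ and scales like $1/s$ under the base stretching. Securing such a quantitative functional-calculus estimate compatible with the adiabatic scaling, simultaneously with the norm-continuity in $s$, is the technical heart of the argument; the conceptual input making it plausible is that as $s\to\infty$ the horizontal derivatives in $D_{\widetilde E}$ are suppressed by $1/s$ while all the fiberwise data remain untouched.
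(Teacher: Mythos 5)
Your proposal is correct and is essentially the paper's own argument: the paper likewise stretches only the horizontal metric (it uses $g^E+(n+r)\pi^*g^B$ on a disjoint union $\coprod_n E_{n+r}$, with $r\in[0,1]$ providing the continuous parameter $s=n+r$), runs the rho construction for the rescaled metrics, observes that horizontal propagation in the original metric tends to zero, and evaluates at $s=1$ to recover $\rho(f)$. The only difference is packaging: the paper's disjoint-union device makes your ``technical heart'' (uniform-in-$t$ finite-propagation approximation with horizontal propagation $\to 0$) automatic, since the rho class of the disjoint union is a single element of $C^*_{L,0}(\coprod_n\widetilde E_{n+r})^G$ and hence admits finite-propagation representatives uniformly in $n$.
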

	\begin{proof}
		Denote by $g^B$, $g^{E'}$ and $g^E$ the metric on $B$, $E'$ and $E$ respectively.
		For any $r\in[0,1]$ and $n\in \mathbb N^+$, let $\coprod_n B_{n+r}$ be the disjoint union of countably many $B$'s, where $B_{n+r}$ is equipped with the metric $(n+r)g^B$. Similarly, we define $\coprod_n E'_{n+r}$ and $\coprod_n E_{n+r}$, where $E'_{n+r}$ and $E_{n+r}$ are equipped with the metric $g^{E'}+(n+r)\pi'^*g^B$ and $g^E+(n+r)\pi^*g^B$.
		
		Now we have the following fibrations
		$$\xymatrix{F' \ar[r]^{} &\coprod_n E'_{n+r}\ar[r]^{p'} &\coprod_n B_{n+r}}\text{ and }\xymatrix{F \ar[r]^{} &\coprod_n E_{n+r}\ar[r]^{p} &\coprod_n B_{n+r}},$$
		and the fiberwise homotopy equivalence $\coprod_n f_{n+r}=\coprod f$
		$$\xymatrix{\coprod_n E'_{n+r}\ar[rr]^{\coprod_n f_{n+r}}\ar[dr]_{p'}&& \coprod_n E_{n+r}\ar[dl]^{p} \\ &\coprod_n B_{n+r}&}$$
		Since $f$ preserve the fibration, it induces a family higher rho invariant as in Definition \ref{def:higher rho odd}
		$$\rho(\coprod_n f_{n+r})\in K_{\dim E}(C^*_{L,0}(\coprod_n \widetilde E_{n+r})^G).$$
		Hence for some $\varepsilon>0$ small enough, $\rho(\coprod_n f_{n+r})$ admits a $K_0$(resp. $K_1$)-representative which is a $\varepsilon$-almost projection (resp. unitary) with finite propagation uniformly in $n\in\mathbb N^+$. Along $\widetilde B$ the propagation of such representative restricted to $\widetilde E_{n+r}$ goes to zero as $n$ goes to infinity.
		
		When $r$ varies in $[0,1]$, the above construction gives rise to a path $\rho_L(f)(s)$ for $s\in[1,+\infty)$, along which the propagation along $\widetilde B$ goes to zero as $s$ goes to infinity. Therefore the path defines a class in $K_{\dim E}(C^*_{\widetilde B,L,0}(\widetilde{E})^G)$, which we will also denote by $\rho_L(f)(s)$. Furthermore, $\rho_L(f)(1)$ represents the same $K$-theory class as $\rho(f)$ in $K_{\dim E}(C^*_{L,0}(\widetilde E)^G)$ by definition. This finishes the proof.
	\end{proof}

	Now we are ready to prove Theorem \ref{thm: fiber rho}. We will go through the proof in details only for the case where the dimension of $B$ and $F$ are both even. The other cases are totally similar.
	\begin{proof}[Proof of Theorem \ref{thm: fiber rho}] Let $\rho_L(f)$ be as constructed in the proof of Lemma \ref{lemma:localized higher rho}. We shall show that
	\begin{equation}\label{eq:localized product}
	\phi_{L,0}([D^{sgn}_B] \otimes   \rho(f;B) )= \rho_L(f)\in K_0(C^*_{\widetilde B,L,0}(\widetilde E)^G)
	\end{equation}
	by the Mayer-Vietoris arguments. And the theorem follows from Lemma \ref{lemma:factorthu} and \ref{lemma:localized higher rho}.
	
	We first assume a special case where $E=F\times B$, a trivial fiber bundle over $B$. In this case, the family algebra $C^*_{L,0}(\widetilde E,\widetilde B)^G$ is isomorphic to $C(B)\otimes C^*_{L,0}(\widetilde F)^{\Gamma}$. The product map $\phi_{L,0}$ and the localized higher rho invariant $\rho_L(f)$ are constructed in Lemma \ref{lemma:factorthu} and \ref{lemma:localized higher rho} respectively. Using the same construction as in Section \ref{sec:product formula}, we will obtain line \eqref{eq:localized product} for this trivial case.
	
	Now we turn to the general situation. For simplicity, we assume that the base space $B$ admits a triangulation that makes it a simplicial complex. Assume that the diameter of every simplex on $B$ is small enough so that the restriction of $E$ on every simplex is trivial. Let $B^{(k)}$ be a small open neighborhood of the $k$-skeleton of $B$, which contain the $k$-skeleton of $B$ as a deformation retraction. In particular, $B^{(k)}=B$ when $k$ is $\dim B$. Denote the lift of $B^{(k)}$ to $\widetilde B$ by $\widetilde B^{(k)}$ and the restriction of $\widetilde E$ to $\widetilde B^{(k)}$ by $\widetilde E_{\widetilde B^{(k)}}$.
	
	For any $K$-theory element in $K_*(C^*_{\widetilde B,L,0}(\widetilde E)^G)$, its restriction to $\widetilde E_{\widetilde B^{(k)}}$ is well defined by multiplying the element by the characteristic function of $\widetilde E_{\widetilde B^{(k)}}$ on both side. Similarly for $K_*(C^*_L(\widetilde B)^H)$ and $K_*(C^*_L(\widetilde E,\widetilde B)^G)$. We will prove that line \eqref{eq:localized product} holds when restricted to $\widetilde E_{\widetilde B^{(k)}}$ by induction on $k$.
	
	When $k$ is zero, $B^{(0)}$ is a disjoint union of small balls in $B$, to which the restriction of $E$ is trivial. Therefore line \eqref{eq:localized product} holds on $\widetilde E_{\widetilde B^{(0)}}$. Now we assume that line \eqref{eq:localized product} holds on $\widetilde E_{\widetilde B^{(k)}}$. Let $\Delta$ be the disjoint union of the interior of every $k+1$-simplex in $B^{(k+1)}$. Denote the lift of $\Delta$ to $\widetilde B$ by $\widetilde \Delta$ and the restriction of $\widetilde E$ to $\widetilde \Delta$ by $\widetilde E_{\widetilde \Delta}$. Note that $B^{(k+1)}=\Delta\cup B^{(k)}$. By Proposition \ref{prop:MV}, we have the following six-term exact sequence:
	$$\xymatrix{K_0(C^*_{\widetilde B,L,0}(\widetilde E_{\widetilde B^{(k)}}\bigcap\widetilde E_{\widetilde{\Delta}})^G)\ar[r]&
		{\begin{matrix}
			K_0(C^*_{\widetilde B,L,0}(\widetilde E_{\widetilde B^{(k)}})^G)\\
			\oplus \\
			K_0(C^*_{\widetilde B,L,0}(\widetilde E_{\widetilde{\Delta}})^G)
			\end{matrix}}
		\ar[r]& K_0(C^*_{\widetilde B,L,0}(\widetilde E_{\widetilde B^{(k+1)}})^G)\ar[dd]\\ &&\\
		K_1(C^*_{\widetilde B,L,0}(\widetilde E_{\widetilde B^{(k+1)}})^G)\ar[uu]&
		{\begin{matrix}
			K_1(C^*_{\widetilde B,L,0}(\widetilde E_{\widetilde B^{(k)}})^G)\\
			\oplus \\
			K_1(C^*_{\widetilde B,L,0}(\widetilde E_{\widetilde{\Delta}})^G)
			\end{matrix}}
		\ar[l]&K_1(C^*_{\widetilde B,L,0}(\widetilde E_{\widetilde B^{(k)}}\bigcap\widetilde E_{\widetilde{\Delta}})^G)\ar[l]
	}$$
	
	From the assumption that the diameter of each simplex of $B$ is small, the restriction of $E$ to $\Delta$ or $\Delta\cap B^{(k)}$ is a disjoint union of trivial bundles. Direct computations show that
\[
\partial (\phi_{L,0}([D_B^{sgn}] \otimes   \rho(f;B) )- \rho_L(f))
\]
is trivial in $K_1(C^*_{\widetilde B,L,0}(\widetilde E_{\widetilde B^{(k)}}\bigcap\widetilde E_{\widetilde{\Delta}})^G)$, thus it lies in the image of the map
\[
{\begin{matrix}
			K_0(C^*_{\widetilde B,L,0}(\widetilde E_{\widetilde B^{(k)}})^G)\\
			\oplus \\
			K_0(C^*_{\widetilde B,L,0}(\widetilde E_{\widetilde{\Delta}})^G)
			\end{matrix}}
		\to K_0(C^*_{\widetilde B,L,0}(\widetilde E_{\widetilde B^{(k+1)}})^G).
\]
Then along with the inductive hypothesis, the $K$-theory classes represented by
	$$\phi_{L,0}([D_B^{sgn}] \otimes   \rho(f;B) )- \rho_L(f)$$
	restricted to $\widetilde E_{\widetilde{\Delta}}$, $\widetilde E_{\widetilde B^{(k)}}$ vanishes, which shows that
\[\phi_{L,0}([D_B^{sgn}] \otimes   \rho(f;B) )- \rho_L(f)\]
is the image of trivial class.
 Now line \eqref{eq:localized product} follows when $k=\dim B$.
 \end{proof}

	\section{For special fiber bundle}
	In this section, we show that Theorem \ref{thm: fiber loc} implies the product formula of numerical signature on fibered manifold given by Chern, Hirzebruch and Serre in \cite{CSH57}.

	Consider the fiber bundle $\pi: E\to B $ with fiber $F$, with all those spaces are $4k$-dimensional oriented closed Riemannian manifolds.  Assume that $\pi_1(B)$ acts trivially on $H_{dR}^*(E)$, the de Rham cohomology of $E$. We would like to use our product formula to prove the original formula introduced by Chern, Hirzebruch and Serre in \cite{CSH57}, namely
	\[\sgn(B)\times \sgn(F)=\sgn(E).\]

	Consider the K-theoretic index map
	\[ind_E: K_0(C^*_L(\tilde{E})^{\pi_1(E)})\to K_0(C^*_L(pt))\cong\mathbb Z\]
	induced by the map that crashes the whole space to a point and forgets the group action. Under this the localized index of signature operator will be mapped to its graded Fredholm index, i.e. $\sgn(E)$. Besides we replace $E$ by the base space $B$ and obtain
	$$ind_B : K_0(C^*_L(\tilde{B})^{\pi_1(B)}) \to K_0(C^*_L(pt))\cong \mathbb Z.$$
	
	
	Recall the equivariant family localization algebra $C^*_L(\tilde E,\tilde B)^{\pi_1(E)}$ is the collection of some sections of a C*-bundle over $B$. Any element $s(t)\in C^*_L(\tilde E,\tilde B)^{\pi_1(E)}$ is viewed as a family of operators
	$s(t)_x\in C_L^*(\tilde F)^{\Gamma}$ for $x\in\tilde B$. Thus we define a family index map
	$$ind_{E,B}: K_0(C^*_L(\tilde{E},\tilde{B})^{\pi_1(E)})\to K_0(C(\tilde B))^{\pi_1(B)})\cong K_0(C(B))	$$
	by taking indices along the fiber.
	
	Moreover, we have the following  classical pairing of $K$-homology and $K$-theory
	$$\langle \cdot,\cdot\rangle: K_0(C_L^*(\tilde B)^{\pi_1(B)})\times K_0(C(B))\to\mathbb Z.$$

	From the construction above, we see that the following diagram commutes.
	$$\xymatrix{
		K_0(C_L^*(\tilde B)^{\pi_1(B)})\otimes K_0(C^*_L(\tilde E,\tilde B)^{\pi_1(E)})\ar[rr]^-{\phi}\ar[dd]^{1\otimes ind_{E,B}}&&K_0(C_L^*(\tilde E)^{\pi_1(E)})\ar[dd]^{ind_E}\\
		\\
		K_0(C_L^*(\tilde B)^{\pi_1(B)})\otimes K_0(C(B))\ar[rr]^-{\langle \cdot,\cdot\rangle}&&\mathbb Z
	}$$
	
	Therefore we have the identity
	$$\sgn(E)=\langle [D_B],ind_{E,B}([D_{E,B}])\rangle.$$
	
	Since $F$ is even-dimensional, the family index $ind_{E,B}([D_{E,B}])$ living in $K_0(C(B))$ can be viewed as a virtual vector bundle over $B$. The local picture of such vector bundle is $[ker(D_F)]-[cok(D_F)]$. As we have assumed that $\pi_1(B)$ acts on $H_{dR}^*(E)$ trivially, the virtual bundle is indeed a trivial bundle, i.e. it comes from the inclusion
	$\mathbb Z\cong K_0(C(pt))\to K_0(C(B))$. Moreover, the preimage of $ind_{E,B}([D_{E,B}])$ under the inclusion is actually
	$$\dim ker(D_F)-\dim cok(D_F)=\sgn(F).$$
	Thus the pairing map $\langle\cdot , \cdot \rangle $ is simplified as followed
	\begin{align*}
	\langle [D_B], ind_{E,B}([D_{E,B}])\rangle &=\langle[D_B], \sgn(F)\rangle\\
	&= ind_B([D_B]) \times  \sgn(F)\\
	&= \sgn(B) \times \sgn(F).
	\end{align*}
	From this we obtain the classical product formula of signature of Chern, Hirzebruch and Serre.

	\bibliography{spfbib}

\begin{thebibliography}{10}

\bibitem{CSH57}
S.~S. Chern, F.~Hirzebruch, and J.-P. Serre.
\newblock On the index of a fibered manifold.
\newblock {\em Proc. Amer. Math. Soc.}, 8:587--596, 1957.

\bibitem{HR1}
N.~Higson and J.~Roe.
\newblock Mapping surgery to analysis. {I}. {A}nalytic signatures.
\newblock {\em {$K$}-Theory}, 33:277--299, 2005.

\bibitem{HR2}
N.~Higson and J.~Roe.
\newblock Mapping surgery to analysis. {II}. {G}eometric signatures.
\newblock {\em {$K$}-Theory}, 33:301--324, 2005.

\bibitem{HR3}
N.~Higson and J.~Roe.
\newblock Mapping surgery to analysis. {III}. {E}xact sequences.
\newblock {\em $K$-Theory}, 33(4):325--346, 2005.

\bibitem{HRYMV}
N.~Higson, J.~Roe, and G.~Yu.
\newblock A coarse {M}ayer-{V}ietoris principle.
\newblock {\em Math. Proc. Cambridge Philos. Soc.}, 114(1):85--97, 1993.

\bibitem{HS92}
M.~Hilsum and G.~Skandalis.
\newblock Invariance par homotopie de la signature {\`a} coefficients dans un
  fibr{\'e} presque plat.
\newblock {\em J. Reine Angew. Math.}, 423:73--99, 1992.

\bibitem{JL19}
B.~Jiang and H.~Liu.
\newblock Additivity of higher rho invariant for topological structure group in
  a differential point of view.
\newblock {\em arXiv:1804.09026}, 2019.

\bibitem{PS16}
P.~Piazza and T.~Schick.
\newblock The surgery exact sequence, {K}-theory and the signature operator.
\newblock {\em Ann. K-Theory}, 1(2):109--154, 2016.

\bibitem{Siegel12}
P.~Siegel.
\newblock {\em Homological calculations with the analytic structure group}.
\newblock ProQuest LLC, Ann Arbor, MI, 2012.
\newblock Thesis (Ph.D.)--The Pennsylvania State University.

\bibitem{TWY16}
X.~Tang, R.~Willett, and Y.-J. Yao.
\newblock Roe {$C^*$}-algebra for groupoids and generalized {L}ichnerowicz
  vanishing theorem for foliated manifolds.
\newblock {\em Math. Z.}, 290(3-4):1309--1338, 2018.

\bibitem{Wahl10}
C.~Wahl.
\newblock Product formula for {A}tiyah-{P}atodi-{S}inger index classes and
  higher signatures.
\newblock {\em J. K-Theory}, 6(2):285--337, 2010.

\bibitem{W13}
C.~Wahl.
\newblock Higher {$\rho$}-invariants and the surgery structure set.
\newblock {\em J. Topol.}, 6(1):154--192, 2013.

\bibitem{WXY16}
S.~Weinberger, Z.~Xie, and G.~Yu.
\newblock Additivity of higher rho invariants and nonrigidity of topological
  manifolds.
\newblock {\em To appear in Communications on Pure and Applied Analysis}.

\bibitem{XY14P}
Z.~Xie and G.~Yu.
\newblock Positive scalar curvature, higher rho invariants and localization
  algebras.
\newblock {\em Adv. Math.}, 262:823--866, 2014.

\bibitem{XY19}
Z.~Xie and G.~Yu.
\newblock Higher invariants in noncommutative geometry.
\newblock {\em arXiv:1905.12632, To appear in the special volume dedicated to
  Alain Connes' 70th birthday}, 2019.

\bibitem{Yulocalization}
G.~Yu.
\newblock Localization algebras and the coarse {B}aum-{C}onnes conjecture.
\newblock {\em $K$-Theory}, 11(4):307--318, 1997.

\bibitem{Z16}
R.~Zeidler.
\newblock Positive scalar curvature and product formulas for secondary index
  invariants.
\newblock {\em J. Topol.}, 9(3):687--724, 2016.

\bibitem{Z17}
V.~F. Zenobi.
\newblock Mapping the surgery exact sequence for topological manifolds to
  analysis.
\newblock {\em J. Topol. Anal.}, 9(2):329--361, 2017.

\end{thebibliography}

\end{document}